\newcommand{\Cr}{\mathcal{C}}
\newcommand{\Col}{\mathrm{Col}}
\newcommand{\mc}[1]{\mathcal{#1}}
\newcommand\1{\mathds1}
\newcommand{\indic}[1]{\1_{\left\{#1\right\}}}
\newcommand{\R}{\mathbf{R}}
\newcommand{\N}{\mathbf{N}}
\renewcommand{\P}{\mathbb P}
\newcommand{\E}{\mathbb E}
\newcommand{\mE}{\mathcal{E}}
\newcommand{\Var}{\mathop{\mathrm{Var}}}
\newcommand{\const}{\mathrm{const}}
\newcommand{\eps}{\varepsilon}
\newcommand{\sss}{\scriptscriptstyle}
\newcommand{\blank}[1]{}
\newtheorem{theorem}{Theorem}
\newtheorem*{theorem*}{Theorem}
\newtheorem{lemma}{Lemma}
\newtheorem{proposition}{Proposition}
\newtheorem{corollary}{Corollary}
\theoremstyle{definition}
\newtheorem{definition}{Definition}
\theoremstyle{remark}
\newtheorem{remark}{Remark}
\newcommand{\ms}{\mathsf}
\newcommand{\vep}{\varepsilon}
\newcommand{\bs}[1]{\boldsymbol{#1}}
\def\a{\alpha}
\def\d{{\rm d}}
\def\e{\varepsilon}
\def\E{\mathbb E}
\def\mc{\mathcal}
\def\ms{\mathsf}
\def\N{\mathbb N}
\def\P{\mathbb P}
\def\R{\mathbb R}
\def\FF{\mc F}
\def\TT{\mc T}
\def\pii{p^{\ms{init}}}
\theoremstyle{plain}
\def\s{\sigma}
\title{WARM percolation on a regular tree\\ in the strong reinforcement regime.}
\begin{document}
\author{Hirsch, C.\footnote{University of Groningen:    \textsf{c.p.hirsch@rug.nl} 
} ,
Holmes, M.\footnote{School of Mathematics and Statistics, the University of Melbourne:   \textsf{holmes.m@unimelb.edu.au}} ,
Kleptsyn, V.\footnote{Univ Rennes, CNRS, IRMAR - UMR 6625, F-35000 Rennes, France:   \textsf{victor.kleptsyn@univ-rennes1.fr}
}
}

%

	\maketitle

\abstract{We consider a class of reinforcement processes, called WARMs, on tree graphs. These processes involve a parameter $\a$ which governs the strength of the reinforcement, and a collection of Poisson processes indexed by the vertices of the graph. It has recently been proved that for any fixed bounded degree graph with Poisson firing rates that are uniformly bounded above, in the very strong reinforcement regime ($\a\gg 1$ sufficiently large depending on the maximal degree), the set of edges that ``survive'' (i.e.~that are reinforced infinitely often by the process) has only finite connected components.

The present paper is devoted to the construction of an example in the opposite direction, that is, with the set of surviving edges having infinite connected components. Namely, we show that for each fixed $\a > 1$ one can find a regular rooted tree and firing rates that are uniformly bounded from above, for which there are infinite components almost surely. Joining such examples, we find a graph (with unbounded degrees) on which for any $\a>1$ almost surely there are infinite connected components of surviving edges.}

\section{Introduction and main result}
Let $G = (V,E)$ be a graph with finite degrees and let $\Lambda$ be a Poisson point process on $V\times [0,\infty)$ with intensity $\lambda_v\in (0,\infty)$ on $\{v\}\times [0,\infty)$. We consider a reinforcement process on the edges of this graph. Namely, to 
every edge $e\in E$ we associate a tally $N_t(e)$ that is a piecewise-continuous function of the time $t$, defined in the following way. 
\begin{itemize}
\item It starts with a tally $N_0(e) = 1$, 
\item Whenever the Poisson clock fires at some vertex $v\in V$ at time $t$, we choose a random edge $e$ among those incident to $v$ (write $e \sim v$) with probability proportional to $N_{t-}(e)^\a$, where $\a > 0$ is a parameter of the model. We then update the tally of the chosen edge $N_t(e) = N_{t-}(e) + 1$ and continue. 
\end{itemize}

 Such a process is called a \emph{WARM process} on the graph $G$ with firing rates $(\lambda_v)_{v \in V}$. If $G$ is an infinite graph then it is not obvious that the process is well-defined. However, if $G$ is of bounded degree (i.e.~the maximal degree is finite) and the $\lambda_v$ are uniformly bounded above then the process is indeed well-defined (see \cite[Section 2]{compass}).

Such processes have been studied extensively in recent years \cite{WARM1,whisker,compass,InfWARM3,InfWARM1}, see also \cite{benj,benj2,Lima} for some similar models. These models can be considered as collections of interacting P\'olya urns, where the interactions are graph-based. One of the quantities of interest is the (random) set of edges that ``survive the competition''. There are two ways of defining it: one can consider either the set of edges that are reinforced infinitely often,
\[\mc E_\infty = \{e\in E:N_t(e) \to \infty\},\]
or the set of those that are reinforced (at least) linearly often:
\[\mc E_+ = \{e\in E: \liminf_{t\to\infty} t^{-1}N_t(e) > 0\}. \]
These two sets actually (almost surely) coincide; in other words, almost surely an edge that is not reinforced linearly often gets reinforced only finitely many times. This can be seen from the general P\'olya urn properties; we refer the reader to~\cite{InfWARM3} for a discussion of these sets. However, we do not need such an equality for the purposes of the present paper, and from now on we will concentrate on the set of linearly reinforced edges~$\mc E_+$.

In \cite{compass} it is shown that if $G$ has bounded degree and the $\lambda_v$ are uniformly bounded above then for all $\a$ sufficiently large $\mc E_\infty$ has no infinite component. A query of a referee for that paper motivated us to find an example of a WARM process with bounded firing rates on a graph of bounded degree for which $\mc E_\infty$ can have an infinite component. {Our main result, Theorem \ref{t:tree} below, concerns the existence of such infinite components on trees.}

{A main ingredient in the proof of these results is the following, that seems to us to be of interest in its own right. It describes the behaviour of a \emph{single} P\'olya urn with the parameter $\a > 1$, starting in a situation akin to a ``cat against a thousand mice'' (that is, in the initial state there are many balls of one colour, competing against single balls of a lot of other colours). Informally speaking, for an appropriate choice of the parameters, the cat will (with high probability) grow into a tiger, while a few mice, even though they lose, will nonetheless grow to the size of a cat.

For convenience, when there are $n + 1$ colours these will be labelled $0,1,\dots, n$ and we will write $(m_0,m_1,\dots, m_n)$ to represent the number of balls of each colour $0,\dots, n$ respectively at some time.

\begin{theorem}\label{t:Polya}
		Fix $\a > 1$. For all sufficiently large $m$ there exists $n = n(\a,m){\ge 5}$, such that the following hold with probability greater than $\frac45$ for an $\a$-P\'olya urn on $n + 1$ colours with initial state $(m,1,1,\dots,1)$:
		\begin{itemize}
			\item only colour 0 is selected more than $2m-1$ times; and 
			\item there are at least $5$ colours selected exactly~$m-1$ times. 
		\end{itemize}
\end{theorem}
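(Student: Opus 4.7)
The plan is to use the standard continuous-time embedding of the $\a$-P\'olya urn. Assign to each colour $i \in \{0, \dots, n\}$ an independent pure-birth process $Y_i(\cdot)$ with jump rate $k^\a$ at state $k$, starting at $Y_0(0) = m$ and $Y_j(0) = 1$ for $j \geq 1$. Since $\a > 1$, each process explodes almost surely at $\zeta_i := \sum_{k \geq Y_i(0)} E_k^{(i)}/k^\a$, with the $(E_k^{(i)})$ i.i.d.\ $\mathrm{Exp}(1)$. The discrete urn is recovered by ordering all jumps, so the unique colour selected infinitely often is $i^* := \operatorname{argmin}_i \zeta_i$, and every other colour $j$ has final size $Y_j(\zeta_{i^*}^-)$. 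It thus suffices to find $n$ such that, with probability $>4/5$: (a) $\zeta_0 < \min_{j \geq 1} \zeta_j$, (b) at least $5$ losers $j$ satisfy $Y_j(\zeta_0) = m$, and (c) no loser satisfies $Y_j(\zeta_0) > 2m$. Since the losers' processes are i.i.d.\ and independent of colour $0$, the task splits into controlling the single-loser law $Y_1(t)$ at times near $\zeta_0$, together with the concentration of $\zeta_0$ itself.

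Set the target time $t_* := \E[\zeta_0] = \sum_{k \geq m} k^{-\a} \sim m^{1-\a}/(\a-1)$. The variance bound $\Var(\zeta_0) = \sum_{k \geq m} k^{-2\a} = O(t_*^2/m)$ gives, via Chebyshev, a concentration window $|\zeta_0 - t_*| \leq t_*/\sqrt{m}$ of probability tending to $1$. The heart of the argument is then the trio of single-loser asymptotics, for some $0 < D_0 < D_2 < D_1$ depending only on $\a$:
\[
\P(Y_1(t_*) = m) \asymp \P(Y_1(t_*) \geq m) \asymp e^{-D_0 m}, \quad \P(Y_1(t_*) \geq 2m+1) \leq e^{-D_2 m}, \quad \P(\zeta_1 \leq t_*) \leq e^{-D_1 m}.
\]
All three flow from a Chernoff estimate on the Laplace transform $\E[e^{-s\zeta_1}] = \prod_{k \geq 1}(1 + s/k^\a)^{-1}$, whose large-$s$ behaviour $\log \E[e^{-s\zeta_1}] \sim -K_\a s^{1/\a}$ yields $\P(\zeta_1 \leq t) \leq \exp(-C_\a t^{-1/(\a-1)})$ as $t \to 0$. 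The rate separation comes from the identity $\zeta_1 - T_1^{(s)} = \sum_{k \geq s} E_k^{(1)}/k^\a$, where $T_1^{(s)} = \sum_{k < s} E_k^{(1)}/k^\a$ is the time for loser $1$ to reach size $s$; the tail $\zeta_1 - T_1^{(s)}$ has mean $\sim s^{1-\a}/(\a-1)$, so $\{T_1^{(s)} \leq t_*\}$ is essentially $\{\zeta_1 \leq t_* + s^{1-\a}/(\a-1)\}$, with effective scale $2t_*$ for $s = m$, $(1 + 2^{1-\a})t_*$ for $s = 2m+1$, and $t_*$ for $\zeta_1$ itself. Comparability of $\P(Y_1(t_*) = m)$ with $\P(Y_1(t_*) \geq m)$ follows because at state $m$ the holding rate $m^\a$ matches the hazard rate of $T_1^{(m)}$ at $t_*$ up to constants.

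With these estimates in hand, I would choose $n$ so that $n\,\P(Y_1(t_*) = m) \approx 20\,e^{\kappa \sqrt{m}}$ for a suitable $\kappa > 0$, large enough that across the Chebyshev window for $\zeta_0$ the lower bound $n\,\P(Y_1(\zeta_0) = m) \geq 20$ persists (the exponents being only $O(\sqrt{m})$-sensitive in the window). Conditionally on $\zeta_0$ the count of losers with final size $m$ is Binomial, so a Chernoff bound yields at least $5$ of them with probability $\geq 19/20$. Simultaneously the positive rate gaps $D_2 - D_0$ and $D_1 - D_0$ (depending only on $\a$) ensure $n\,\P(Y_1(t) \geq 2m+1)$ and $n\,\P(\zeta_1 \leq t)$ remain $o(1)$ uniformly over $t$ in the window, so Markov rules out any violation of (a) or (c). A union bound closes the argument for all sufficiently large $m$. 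The main obstacle will be carefully tracking constants in the Laplace-transform asymptotics to establish the rate gap $D_0 < D_2 < D_1$ explicitly enough to dominate the $\sqrt{m}$-scale sensitivity of the exponents across the concentration window; heuristically, for a loser to reach size $\geq 2m+1$ or to fully explode imposes an additional constraint on the tail of its exponential sequence beyond the initial rare burst needed just to reach size $m$, each such constraint costing an independent exponentially small factor.
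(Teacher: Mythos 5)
Your architecture is the same as the paper's: the Rubin/Athreya--Karlin embedding, Chebyshev concentration of the winner's explosion time $\zeta_0$ around $t_*\sim \frac{1}{(\a-1)m^{\a-1}}$ in a window of relative width $O(1/\sqrt m)$, i.i.d.\ analysis of the $n$ losers conditionally on $\zeta_0$, an implicit choice of $n$ calibrated so that the expected number of losers frozen at exactly $m$ is a large constant throughout the window, and a binomial concentration step. Conditions (a), (b), (c) correspond exactly to the paper's claims, and the reduction to controlling the single-loser law $Y_1(t)$ for $t$ in the window is precisely how the paper proceeds.

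The genuine gap is in the ``trio of single-loser asymptotics''. Everything in your plan ultimately rests on \emph{lower} bounds: you need $\P(Y_1(t)=m)\ge e^{-D_0 m-o(m)}$ uniformly over the window (to bound $n$ from above and to keep the binomial mean $\ge 20$ as $t$ varies), and you need this $D_0$ to be \emph{strictly smaller} than the rates $D_2,D_1$ appearing in the upper bounds for $\P(Y_1(t)\ge 2m+1)$ and $\P(\zeta_1\le t)$. But the only tool you invoke --- a Chernoff bound on $\E[e^{-s\zeta_1}]$ --- produces upper bounds only; it cannot certify that $\P(Y_1(t_*)\ge m)$ is \emph{as large as} $e^{-D_0 m}$, nor that $\P(Y_1(t_*)=m)\asymp \P(Y_1(t_*)\ge m)$ (your hazard-rate heuristic is unjustified; a priori the atom at $m$ could be a vanishing fraction of the tail). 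Without a matching large-deviations lower bound (an exponential tilting argument, or an explicit density computation), the ratios $n\,\P(Y_1(t)\ge 2m+1)$ and $n\,\P(\zeta_1\le t)$ are simply not controlled, because $n$ is defined through the unbounded-below quantity $\P(Y_1(t_*)=m)$. The rate separation itself is true (the rates are $\sup_{\sigma}\bigl[\int_0^r\log(1+\sigma v^{-\a})\,\d v-\tfrac{\sigma}{\a-1}\bigr]$ for $r=1,2,\infty$, strictly increasing in $r$), but proving it together with the two-sided bound at $r=1$ is substantial work. The paper avoids all of this: it sets $n=\lceil 100\,m^\a/\P(Z'_k<s_-)\rceil$ and then only ever needs \emph{ratios} of the distribution function of $T^{(m)}$ at nearby arguments --- the scaling inequality $\P(Z'<s')\le (s'/s)^m\P(Z'<s)$ for the window sensitivity, an increasing-density argument for the mass of the last $m^{-\a}$-interval, and the ``$p$-growing'' property for the depletion below $(1-\delta)s_-$. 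Condition (c) is then handled not by comparing absolute rates but by the decomposition $T^{(2m+1)}=T^{(m)}+\sum_{j=m}^{2m}\eta_j/j^\a$, a separate elementary LDP for the added sum, and the deterministic inequality $(1-\delta)s_-+\tfrac{1}{10(2m)^{\a-1}}>s_+$. Your closing heuristic (``an additional constraint on the tail beyond the initial rare burst'') is exactly this decomposition; making it the proof, rather than a justification for a rate gap, is what closes the hole.
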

\begin{remark}
As the reader will easily see from the proof, the number ``$5$'' of colours in the second conclusion 
can be replaced with any other (arbitrarily large) number, and the probability $4/5$ can be 
replaced with any other $p<1$. We proceed with these fixed numbers only to reduce the 
complexity of the formulae and conclusions, making them a bit more readable.
\end{remark}

In order to use this theorem in the proof of our main results, let us fix how the 
P\'olya urn process above is defined, via a sequence of i.i.d.~standard uniform random variables $(U_i)_{i \in \N}$. The first selection from the urn depends on the value of $U_1$ and subsequently the $i$th selection depends on $U_i$ and the configuration after the previous $i-1$ selections. Namely, starting with initial tallies $(N_0(0),N_0(1),\dots , N_{0}(n)) = (m,1,\dots,1)$ we select colour $i$ at time $j\in \N$ if 
\begin{align}
\label{Upolya}
\dfrac{\sum_{i'<i}N_{j-1}(i')^\a}{\sum_{i'=0}^{n}N_{j-1}(i')^\a}<U_j \le \dfrac{\sum_{i'\le i}N_{j-1}(i')^\a}{\sum_{i'=0}^{n}N_{j-1}(i')^\a}.
\end{align}
Here and elsewhere in the paper, an empty sum is equal to 0 by convention.

\begin{remark}
Our WARM process will be defined on a probability space in terms of independent Poisson clocks located at the vertices $v\in V$ of the graph and i.i.d.~standard uniform random variables $(U_{v,i})_{v \in V,i \in \N}$ from which the edge choice upon the $i$th firing at vertex $v$ is made.
\end{remark}

A standard argument then allows us to deduce from Theorem~\ref{t:Polya} 
that the same conclusions hold after a sufficiently large but finite number of steps:
\begin{corollary}\label{cor:Polya}
For any $\a,m,n$ as in Theorem~\ref{t:Polya} (with $(m,1,...,1)$ as the initial state) there exists $M_0$ such that for any $M\ge M_0$ with probability greater than $\frac45$,
 the values of $U_1,\dots,U_M$ are such that in the first $M$ colour selections:
\begin{quote}
		\begin{itemize}
			\item only colour $0$ is selected more than $2m-1$ times; and 
			\item there are at least $5$ colours selected exactly~$m-1$ times. 
		\end{itemize}
\end{quote}
\end{corollary}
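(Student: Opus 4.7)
The plan is to deduce the corollary from Theorem~\ref{t:Polya} by the standard continuity-of-probability argument. Let $B$ denote the infinite-time event of Theorem~\ref{t:Polya} (so $\P(B) > 4/5$), and for each $M \in \N$ let $C_M$ denote the analogous event at step $M$ appearing in the corollary statement. I would show that $B$ is contained in $\bigcup_{M_0} \bigcap_{M\ge M_0} C_M$ and use continuity of probability to replace $B$ by the finite-$M_0$ version at a cost in probability smaller than $P(B)-4/5$.

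On $B$, every non-zero colour has final count at most $2m-1$, hence is selected only finitely often; so there is an a.s.\ finite random time $T_0$ after which only colour $0$ is ever selected. On $B$ colour $0$ is moreover selected infinitely often (it is selected strictly more than $2m-1$ times), so it reaches count $2m$ at some finite random time $T_1$; and each of the $5$ promised colours reaches its final count $m-1$ at some finite time, with $T_2$ denoting the maximum of these. Set $T := \max(T_0, T_1, T_2)$, which is finite a.s.\ on $B$. For any $M \ge T$ the event $C_M$ holds: the count of any non-zero colour at step $M$ is bounded by its final count, which is at most $2m-1$; colour $0$ has count at least $2m$ since $M \ge T_1$; and the five promised colours still have count exactly $m-1$ at step $M$ because no non-zero colour is selected after $T_0$.

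It follows that $B \cap \{T \le M_0\} \subseteq C_M$ for every $M \ge M_0$. By continuity of probability, $\P(B \cap \{T \le M_0\}) \uparrow \P(B) > 4/5$ as $M_0 \to \infty$, so one may choose $M_0$ large enough that $\P(B \cap \{T \le M_0\}) > 4/5$. Then for every $M \ge M_0$ we have $\P(C_M) \ge \P(B \cap \{T \le M_0\}) > 4/5$, which is the desired conclusion. There is no real obstacle here; the only minor point to be careful about is that ``only colour $0$ is selected more than $2m-1$ times'' at step $M$ requires both that colour $0$ has already passed count $2m-1$ (ensured by $M \ge T_1$) and that no other colour has reached count $2m$ (automatic from the infinite-time bound on $B$, since counts are monotone).
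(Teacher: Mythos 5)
Your proof is correct and is precisely the ``standard argument'' the paper invokes without spelling out: the events of the corollary are eventually implied by the infinite-time event of Theorem~\ref{t:Polya} once all finitely-selected colours have stabilised and colour $0$ has accumulated $2m$ selections, and continuity of probability converts the strict inequality $\P(B)>4/5$ into a uniform bound for all $M\ge M_0$. No gaps; the monotonicity observations you flag at the end are exactly the points that need checking.
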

}

To state our main result, Theorem~\ref{t:tree} below, let us first introduce some notation. Let $\TT = \TT_n = (V,E)$ be the rooted $n$-ary tree with the root $x_0$, {where the root has degree $n$ and all other vertices have degree $n+1$.} Let $d:V\to \N\cup \{0\}$ be the distance to the root. Also, let us introduce the notion of a ``finitely-joined subgraph'': 
\begin{definition}
For a graph $G_2 = (V_2,E_2)$ and its subgraph 
\[
G_1 = (V_1,E_1), \quad V_1\subset V_2, \quad E_1\subset E_2, 
\] 
we write $G_1\sqsubset G_2$ if $E_{1,2}: = E_2\setminus E_1$ contains only finitely many edges incident to~$V_1$; in this case, we say that $G_1$ is \emph{a finitely-joined subgraph} of $G_2$. {For WARM processes on $G_1$ and $G_2$ where $G_1\sqsubset G_2$, we write $G_1\overset{\bs{\lambda}}{\sqsubset}G_2$, if the WARM process on $G_2$ has firing rates on $V_1\subset V_2$ equal to those of the WARM process on $G_1$.}
\end{definition}

Our main result is the following theorem.
\begin{theorem}\label{t:tree}$ $
\begin{enumerate}[\normalfont (i)]
\item For any $\a > 1$ there exist $n\in\N$, $q\in (0,1)$, such that for the graph $\TT_n$, equipped with the firing rates $\lambda_v = q^{d(v)}$, \begin{quote} 
the set of linearly reinforced edges $\mE_+ $ for the corresponding WARM process almost surely contains (infinitely many) infinite connected components.
\end{quote}

\hspace{-1cm}
Moreover, the above holds uniformly in a neighbourhood of $\a$, i.e.

	\item For any $\a>1$ there exists a neighbourhood $O_\a$ of $\a$ such that: there exist $n\in\N$, $q\in (0,1)$, such that for the graph $\TT_n$, equipped with the firing rates $\lambda_v = q^{d(v)}$, 
	\begin{quote}
 for all $\a_0\in O_\a$, the set of linearly reinforced edges $\mE_+ $ for the corresponding WARM process almost surely contains (infinitely many) infinite connected components.	
 \end{quote}
\end{enumerate}
Finally, {\normalfont (ii)} above holds for any graph $T'$ such that {$\TT_n\overset{\bs{\lambda}}{\sqsubset} T'$. }
\end{theorem}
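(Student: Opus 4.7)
The plan is to use Corollary~\ref{cor:Polya} as the building block of a dominating supercritical branching process on $\TT_n$. Fix $\a > 1$, and apply Corollary~\ref{cor:Polya} to obtain integers $m$, $n \ge 5$, $M$ so that, starting from the ``cat-vs-mice'' configuration $(m, 1, \dots, 1)$ on $n + 1$ colours, the good event -- only the cat (colour~$0$) is selected more than $2m - 1$ times, and at least five mice reach size $m$ -- holds with probability $> \tfrac{4}{5}$ in $M$ steps. Consider $\TT_n$ with firing rates $\lambda_v = q^{d(v)}$ for a small $q \in (0, 1)$ to be tuned.

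Call a non-root vertex $v$ \emph{successfully cascaded} if (i)~the parent edge $(v^-, v)$ reaches tally $m$ before $v$'s first firing, and (ii)~the next $M$ firings at $v$ satisfy the good event of Corollary~\ref{cor:Polya}. Given~(i), the tallies of $v$'s edges at each firing of $v$ evolve as a P\'olya urn driven by the uniforms $(U_{v,i})_i$ via~\eqref{Upolya}, with firings elsewhere able only to reinforce the parent edge -- which only helps the cat stay dominant. Hence (ii) holds with conditional probability $> \tfrac{4}{5}$ by Corollary~\ref{cor:Polya}, after possibly enlarging $n$ to absorb a bounded overshoot of the initial cat size. Upon success, five children of $v$ inherit the seed configuration $(m, 1, \dots, 1)$ at their own edges, provided they have not fired during $v$'s cascade -- an event of probability $\ge 1 - O(Mq)$, which is made close to $1$ by choosing $q$ small. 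Cascades at distinct children of $v$ use disjoint Poisson clocks and disjoint uniform families, hence are conditionally independent given the state at $v$. Iterating, the set of successfully cascaded vertices stochastically dominates a Galton--Watson tree of mean offspring $> 5 \cdot \tfrac{4}{5} = 4 > 1$ (supercritical), which is infinite with positive probability, and whose surviving subtree gives a connected infinite set of edges, each linearly reinforced in the long run (the cat at each successful vertex is the asymptotic winner of its local urn and is thus selected at positive rate).

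The proof is then closed by three additional steps. (a) Initialization near the root is handled by a direct positive-probability computation: with positive probability the first $m - 1$ firings of the root all select a single child edge while the corresponding child has not yet fired, which suffices to launch the branching process. (b) The a.s.\ upgrade proceeds by observing that the event ``$\mc E_+$ contains an infinite connected component'' is insensitive to the dynamics on any finite set of vertices, hence by a Kolmogorov $0$--$1$ argument this event has probability $0$ or $1$ and so equals $1$ since it has just been shown to be positive; the stronger ``infinitely many infinite components'' claim follows by running the same seeding-and-cascade argument independently in infinitely many disjoint deep subtrees. (c) Part~(ii) follows from continuity: the finite-horizon P\'olya urn probabilities in Corollary~\ref{cor:Polya} depend continuously on $\a$, so the bound $> \tfrac{4}{5}$ persists for all $\a' \in O_\a$ with the same $(n, q)$, as does the whole argument. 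Finally, if $\TT_n \overset{\bs{\lambda}}{\sqsubset} T'$, the finitely many extra edges incident to $V(\TT_n)$ are neutralized by conditioning on their never being selected during the bounded initial cascade windows (a positive-probability event), after which the branching argument runs on $T'$ exactly as on $\TT_n$. The principal technical obstacles will be formulating the robust version of Corollary~\ref{cor:Polya} needed to handle a fluctuating initial cat size during concurrent cascades, and tuning $q$ to achieve the required timescale separation between levels while keeping the parent-edge overshoot under control.
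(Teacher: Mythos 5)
Your high-level strategy---use Corollary~\ref{cor:Polya} to engineer a supercritical Galton--Watson tree of ``successfully cascaded'' vertices whose surviving edges are linearly reinforced---is essentially the paper's approach (the paper's ``crystallisation tree'' rooted at a ``tree-good'' vertex). But as written, the proposal defers several pieces of hard technical content to a single sentence, and in a couple of places the shortcut does not work.

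First, there is no disconnecting argument. To conclude that there are \emph{infinitely many} infinite components, it is not enough to exhibit infinitely many infinite connected subsets of $\mE_+$ sitting in disjoint deep subtrees: they could all be joined through common ancestors, giving only one component. The paper's Lemma~\ref{lem:disconnect} shows that with probability bounded away from zero a chosen edge $vv_n$ is \emph{never} reinforced, and the events $J_k$ in the proof of Theorem~\ref{t:tree} intersect a disconnection event $E_{v_{(k)}}$, an initial-spark event $A_{\bar z_{(k)}}$, and an infinite-crystallisation-tree event. These $J_k$ are constructed to be \emph{independent} across $k$ (Lemma~\ref{l:independent}), so by Borel--Cantelli II infinitely many occur almost surely, and the disconnections guarantee the components are distinct. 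Without something like this the ``infinitely many'' claim is unjustified.

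Second, the almost-sure upgrade via a Kolmogorov $0$--$1$ law is not sound as stated. You assert the event $\{\mE_+ \text{ has an infinite component}\}$ is insensitive to the dynamics at any finite set of vertices; but the WARM dynamics at a low-depth vertex $v$ cascades through all of $D(v)$, so changing the uniforms and clocks at $v$ can in principle destroy the crystallisation tree rooted anywhere in $D(v)$. The event is therefore not obviously tail-measurable. The paper bypasses this by directly building the independent events $J_k$, so no $0$--$1$ law is needed.

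Third, the timing/bookkeeping is glossed over in a way that genuinely matters. You acknowledge that the cat size at a child may overshoot $m$ and propose to ``enlarge $n$ to absorb a bounded overshoot''; but Corollary~\ref{cor:Polya} and all the supporting estimates (Propositions~\ref{p:S}--\ref{p:subexp}) are tuned to the exact seed $(m,1,\dots,1)$, and handling a fluctuating $m'\in[m,2m-1]$ would require a genuinely different, uniform-in-$m'$ version. The paper avoids this altogether by arranging, via the time-regularity and scale separation~\eqref{eq:qeM}, that each P-child inherits \emph{exactly} $(m,1,\dots,1)$: the parent is \emph{nurturing} (reinforces $0_v$ exactly $m-1$ times before $t_0(v)$) and \emph{non-disturbing} (reinforces $0_v$ not at all on $[t_0(v),t_1(v)]$), and the ``P\'olya-non-disturbing'' and ``P\'olya-well-behaved'' conditions at $v$ itself carry the induction (Proposition~\ref{p:induction}). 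Likewise, establishing that the winning edge is asymptotically \emph{linearly} reinforced needs more than ``the cat is the asymptotic winner'': later on, the children's clocks pour reinforcement into the sibling edges of $0_v$, and one must control the fraction of $v$'s own selections that leak to them; this is the $\delta_0$-computation in~\eqref{eq:0v}--\eqref{eq:sum-delta}.

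Your treatment of part (ii) (continuity of the finite-horizon P\'olya probability in $\a$) and of the finitely-joined extension to $T'$ matches the paper's.
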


It is easy to join bounded-degree graphs together (see e.g.~Figure \ref{f:ex-th-2}) in order to prove the following.
\begin{theorem}\label{t:a}
There exists a connected graph $G = (V,E)$ and a bounded intensity function $\lambda:V\to \R_+ $, such that for any $\a > 1$ the WARM process on $(G,\lambda)$ is well-defined and almost surely the corresponding set $\mE_+ $ of linearly reinforced edges contains (infinitely many) infinite connected components. 
\end{theorem}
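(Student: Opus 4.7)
The plan is to promote the local existence supplied by Theorem~\ref{t:tree}(ii) to a single graph that witnesses infinite components for every $\a>1$ at once. The key observation is that the final clause of Theorem~\ref{t:tree}(ii) is stable under enlarging the graph, as long as the witnessing tree remains a finitely-joined subgraph with unchanged firing rates. Hence it suffices to cover $(1,\infty)$ by countably many of the neighbourhoods produced by that theorem and then glue the corresponding witnessing trees together along an auxiliary spine.

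For each $\a>1$, Theorem~\ref{t:tree}(ii) supplies an open neighbourhood $O_\a\subset(1,\infty)$ together with parameters $n_\a\in\N$ and $q_\a\in(0,1)$. Since $(1,\infty)$ is Lindel\"of, I extract a countable subcover $(O_{\a_k})_{k\in\N}$, and let $T_k$ be an isomorphic copy of $\TT_{n_{\a_k}}$ with root $x_0^{(k)}$ and firing rates $\lambda_v^{(k)} := q_{\a_k}^{d(v)}$, where $d(v)$ denotes the distance to the root inside $T_k$. I then construct $G=(V,E)$ by taking a spine $\{s_k:k\in\N\}$ with edges $\{(s_k,s_{k+1}):k\in\N\}$ and attaching each $T_k$ to the spine via the single edge $(s_k,x_0^{(k)})$. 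The firing rates on $G$ are then $\lambda_v = \lambda_v^{(k)}$ on $V(T_k)$ and $\lambda_{s_k}=1$ on the spine; this is uniformly bounded by $1$. Every vertex of $G$ has finite degree, so each vertex supports an independent Poisson clock of bounded rate together with a well-defined selection rule among finitely many incident edges, whence the WARM process on $(G,\lambda)$ is well-defined for every $\a>0$.

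By construction, the only edge in $G\setminus T_k$ incident to $V(T_k)$ is $(s_k,x_0^{(k)})$, and the firing rates on $V(T_k)$ in $G$ coincide with those in $T_k$, so $T_k\overset{\bs{\lambda}}{\sqsubset}G$ for every $k$. Given any $\a>1$, I choose $k$ with $\a\in O_{\a_k}$; the last sentence of Theorem~\ref{t:tree}(ii) then guarantees that almost surely the set $\mE_+$ of linearly reinforced edges for the WARM process on $(G,\lambda)$ with parameter $\a$ contains infinitely many infinite connected components, all of them lying inside $T_k\subset G$. Since $(G,\lambda)$ is chosen once and for all, this yields Theorem~\ref{t:a}.

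The argument is essentially organisational rather than probabilistic; the only point needing care is that the gluing preserves the finitely-joined relation and the firing rates of each witnessing tree, which is arranged by attaching each $T_k$ to the spine by a single edge. A mild secondary nuisance is that the resulting $G$ has unbounded degree (the degree of $x_0^{(k)}$ is $n_{\a_k}+1$, which is unbounded in $k$), but this is consistent with the statement of Theorem~\ref{t:a} and does not affect well-definedness since local finiteness suffices.
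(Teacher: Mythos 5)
Your construction and the reduction to Theorem~\ref{t:tree}(ii) are essentially the paper's: extract a countable cover $(O_{\a_k})$ of $(1,\infty)$, attach the witnessing trees along a half-line so that each $T_k\overset{\bs{\lambda}}{\sqsubset}G$, and invoke the final clause of Theorem~\ref{t:tree}. That part is fine.

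The genuine gap is in the well-definedness step. You claim that because every vertex of $G$ has finite degree and bounded rate, ``the WARM process on $(G,\lambda)$ is well-defined,'' and later that ``local finiteness suffices.'' That is not a valid justification. Well-definedness of a WARM process on an infinite graph is not a matter of each individual firing having finitely many incident edges to choose from; the obstruction is that resolving the edge choice at a firing $(v,t)$ requires knowing the tallies $N_{t-}(e)$ for $e\sim v$, which depend on earlier firings at neighbours of $v$, which depend on still earlier firings at their neighbours, and so on. The process is well-defined precisely when almost surely there are no infinite \emph{descending chains} $(v_j,T_{v_j,k_j})_{j\in\N}$ with $v_{j+1}\sim v_j$ and $T_{v_{j+1},k_{j+1}}<T_{v_j,k_j}$. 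For bounded-degree graphs with bounded rates this is a theorem of \cite{compass}, but your $G$ has unbounded degree (as you yourself note), so that theorem does not apply, and the naive union bound over descending chains of length $k$ involves products of the unbounded degrees $n_{\a_k}$ and need not vanish. The paper closes this gap with a specific argument: any infinite descending chain in $G$ either eventually stays inside a single tree $T_k$ (impossible a.s., since $T_k$ has bounded degree) or crosses between trees infinitely often, in which case it contains an infinite descending sub-chain supported on the spine (also impossible a.s., the spine being a bounded-degree graph). Without some such argument your proof of well-definedness does not stand.
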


\begin{figure}[!h!]
	\begin{tikzpicture}
\node[circle,fill = black,scale = 0.3,label=1] (A0) at (0,0) {};
\node[circle,fill = black,scale = 0.3,label=1] (A1) at (6,0) {};
\node[circle,fill = black,scale = 0.3,label=1] (A2) at (12,0) {};
\node (D1) at (13,0) {$\dots$};
\draw (0,0)--(12.5,0);

\node[circle,fill = black,scale = 0.3,label=left:{$q_1$}] (B11) at (-0.5,-1) {};
\node[circle,fill = black,scale = 0.3,label=left:{$q_1$}] (B12) at (0.5,-1) {};
\draw (A0)--(B11);
\draw (A0)--(B12);

\node[circle,fill = black,scale = 0.3,label=left:{$q_2$}] (B21) at (5,-1) {};
\node[circle,fill = black,scale = 0.3,label=left:{$q_2$}] (B22) at (6,-1) {};
\node[circle,fill = black,scale = 0.3,label=left:{$q_2$}] (B23) at (7,-1) {};
\draw (A1)--(B21);
\draw (A1)--(B22);
\draw (A1)--(B23);

\node[circle,fill = black,scale = 0.3,label=left:{$q_3$}] (B31) at (10.5,-1) {};
\node[circle,fill = black,scale = 0.3,label=left:{$q_3$}] (B32) at (11.5,-1) {};
\node[circle,fill = black,scale = 0.3,label=left:{$q_3$}] (B33) at (12.5,-1) {};
\node[circle,fill = black,scale = 0.3,label=left:{$q_3$}] (B34) at (13.5,-1) {};
\draw (A2)--(B31);
\draw (A2)--(B32);
\draw (A2)--(B33);
\draw (A2)--(B34);

\node[circle,fill = black,scale = 0.3,label=left:{$q_1^2$}] (C111) at (-1,-2) {};
\node[circle,fill = black,scale = 0.3] (C112) at (-0.5,-2) {};
\node[circle,fill = black,scale = 0.3] (C121) at (0.5,-2) {};
\node[circle,fill = black,scale = 0.3,label=right:{$q_1^2$}] (C122) at (1,-2) {};
\draw (B11)--(C111);
\draw (B11)--(C112);
\draw (B12)--(C121);
\draw (B12)--(C122);
\node (LD) at (0,-2.5) {$\vdots$};
\node (LDL) at (-1.25,-2.5) {$\iddots$};
\node (LDR) at (1.25,-2.5) {$\ddots$};

\node[circle,fill = black,scale = 0.3,label=left:{$q_2^2$}] (C211) at (4,-2) {};
\node[circle,fill = black,scale = 0.3] (C212) at (4.5,-2) {};
\node[circle,fill = black,scale = 0.3] (C213) at (5,-2) {};
\node[circle,fill = black,scale = 0.3] (C221) at (5.5,-2) {};
\node[circle,fill = black,scale = 0.3] (C222) at (6,-2) {};
\node[circle,fill = black,scale = 0.3] (C223) at (6.5,-2) {};
\node[circle,fill = black,scale = 0.3] (C231) at (7,-2) {};
\node[circle,fill = black,scale = 0.3] (C232) at (7.5,-2) {};
\node[circle,fill = black,scale = 0.3,label=right:{$q_2^2$}] (C233) at (8,-2) {};
\draw (B21)--(C211);
\draw (B21)--(C212);
\draw (B21)--(C213);
\draw (B22)--(C221);
\draw (B22)--(C222);
\draw (B22)--(C223);
\draw (B23)--(C231);
\draw (B23)--(C232);
\draw (B23)--(C233);
\node (MD) at (6,-2.5) {$\vdots$};
\node (MDL) at (4,-2.5) {$\iddots$};
\node (MDR) at (8,-2.5) {$\ddots$};

\node (RD1) at (10,-1.5) {$\iddots$};
\node (RD1) at (11,-1.5) {$\iddots$};
\node (RD1) at (14,-1.5) {$\ddots$};

\node[circle,fill = black,scale = 0.3,label=left:{$q_3^2$}] (C331) at (11.5,-2) {};
\node[circle,fill = black,scale = 0.3] (C332) at (12,-2) {};
\node[circle,fill = black,scale = 0.3] (C333) at (12.5,-2) {};
\node[circle,fill = black,scale = 0.3,label=right:{$q_3^2$}] (C334) at (13,-2) {};
\draw (B33)--(C331);
\draw (B33)--(C332);
\draw (B33)--(C333);
\draw (B33)--(C334);
\node (RD) at (12.5,-2.5) {$\vdots$};

\node (RDL) at (11.5,-2.5) {$\iddots$};
\node (RDR) at (13.5,-2.5) {$\ddots$};
\end{tikzpicture}

\caption{Example of a graph for Thm.~\ref{t:a}. The firing rates are indicated at various vertices.}\label{f:ex-th-2}
\end{figure}
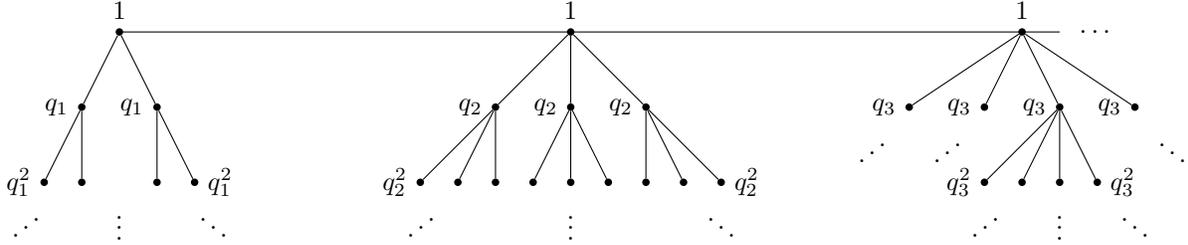

Before we proceed with the detailed proof (assuming Theorem \ref{t:Polya}) of our main result Theorem \ref{t:tree}, it is useful to present sketches of the proofs which indicates the main features of the arguments.

\subsection{Main ideas and plan of the paper}

\begin{proof}[Sketch of the proof of Theorem~\ref{t:Polya}]
	To prove Theorem~\ref{t:Polya} we will use Rubin's construction~\cite{davis}. Namely, take families $(\xi_j,\eta_{j,k})_{j \in \N, k = 1,\dots, n}$ of i.i.d.~exponential random variables with mean 1. These random families can be coupled with the $\a$-P\'olya urn (starting with $m$ balls of colour 0 and one ball of each of the $n$ other colours) in the following way. For $r \in \N$ consider the partial sums
	\begin{equation}
	\label{partials}
	S^{(r)} = \sum_{j = m}^{m+r-1}\frac{\xi_j}{j^\a} \quad \text{ and } Z_k^{(r)} = \sum_{j = 1}^{r}\frac{\eta_{j,k}}{j^\a}.
	\end{equation}
of the series
\begin{equation}\label{eq:sums}
S=\sum_{j = m}^{\infty}\frac{\xi_j}{j^\a} \quad \text{ and } Z_k = \sum_{j = 1}^{\infty}\frac{\eta_{j,k}}{j^\a}
\end{equation}
respectively. 

The P\'olya urn process can then be coupled with this collection in the following way. On the real line one places a ball of colour $0$ in the urn at each time in $(S^{(r)})_{r\in \N}$ and a ball of colour $k$ in the urn at each time in $(Z_k^{(r)})_{r\in\N}$.

The winning colour is the one for which the sum of the full series~\eqref{eq:sums} is smaller, and the final tallies for the losing colours are equal to the initial ones plus the numbers of the respective last partial sums~\eqref{partials} that are less than the full sum of the winning series.
	
	Now, note, that for a large $m$ the series $S=\sum_{j = m}^{\infty}\frac{\xi_j}{j^\a}$ has many terms of roughly the same magnitude, thus (in the spirit of the Law of Large Numbers) its sum will probably be close to its expectation, which itself is close to $\frac1{(\a-1)m^{\a-1}}$; see Proposition~\ref{p:S}.

	To ensure the conclusions of Theorem~\ref{t:Polya}, we have to take the (large) number $n$ of other colours such that:
\begin{itemize}
\item It is sufficiently large, so that with high probability we observe at least $5$ (random) colours $k=k_1,\dots,k_5$ such that $Z_k^{(m-1)}<S<Z_k^{(m)}$.
\item However, it is sufficiently small, so that with high probability $Z_k^{(2m)}>S$ for all~$k$.
\end{itemize}
We are doing this in Section~\ref{s:Polya}.
\end{proof}

\begin{proof}[Sketch proof of Theorem \ref{t:tree}]
Fix $m,n,M$ given by the conclusions of Corollary~\ref{cor:Polya}.
In fact, we will see that the conclusions of Theorem~\ref{t:tree} actually hold for \emph{all} 
sufficiently small~$q > 0$. Namely, we will observe the following process happening recurrently for vertices farther and farther from the root (and thus occurring on larger and larger time-scales); its timeline is illustrated in Fig.~\ref{f:time}.
\begin{enumerate}
\item\label{i:d5} Before the vertex $v$ has first fired, the edge (let's call it $0_v$) from vertex $v$ to its parent $\bar v$ has received $m-1$ reinforcements from its parent, and will receive no more in the {near} future. The firings at vertex $v$ then start to occur, with reinforcements from these firings following the $\a$-power P\'olya scheme (based on i.i.d.~uniform random variables $(U_{v,i})_{i \in \N}$).

Its behaviour is then described by Theorem~\ref{t:Polya}: with probability at least $4/5$, the edge towards its parent ``wins'', becoming 
the only one to be reinforced by $v$ substantially, while at least $5$ 
{edge-offspring (i.e.~edges from a vertex to its children)} 
of $v$ get exactly $m-1$ reinforcements.
		\hspace{-1.5cm}
\item From now on, $v$ reinforces the edge towards its parent, while 
{the children of $v$} 
from step~\ref{i:d5} act each 
by repeating the same scenario, but on the $q$-slowed time scale.

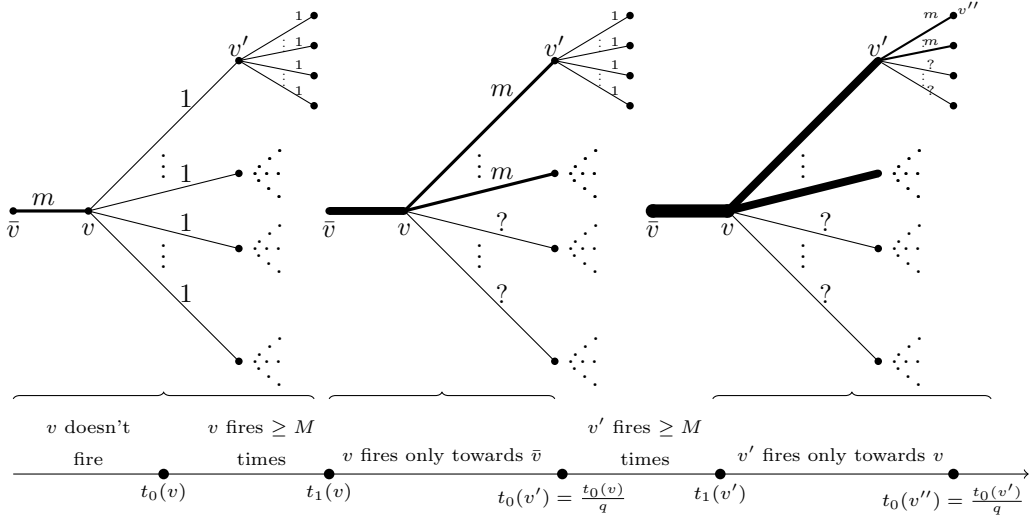
\begin{figure}[!h!]
\begin{tikzpicture}


	\draw[->] (1,-3.5)--(14.5,-3.5);

	\coordinate[label={[align=center]:\scriptsize{$v$ doesn't} \\ \scriptsize{fire}}] (A) at (2.0, -3.50);
	\fill (3,-3.5) circle (2pt);
	\coordinate[label={[align=center]below:\scriptsize{$t_0(v)$} }] (A) at (3.0, -3.50);

	\coordinate[label={[align=center]:\scriptsize{$v$ fires $\ge M$} \\ \scriptsize{times}}] (A) at (4.3, -3.50);
	\fill (5.2,-3.5) circle (2pt);
	\coordinate[label={[align=center]below:\scriptsize{$t_1(v)$} }] (A) at (5.2, -3.50);

	\coordinate[label={[align=center]:\scriptsize{$v$ fires only towards $\bar v$}}] (A) at (6.7, -3.50);
	\fill (8.3,-3.5) circle (2pt);
	\coordinate[label={[align=center]below:\scriptsize{$t_0(v') = \tfrac{t_0(v)}q$} }] (A) at (8.3, -3.50);

	\coordinate[label={[align=center]:\scriptsize{$v'$ fires $\ge M$} \\ \scriptsize{times}}] (A) at (9.4, -3.50);
	\fill (10.4,-3.5) circle (2pt);
	\coordinate[label={[align=center]below:\scriptsize{$t_1(v')$} }] (A) at (10.4, -3.50);

	\coordinate[label={[align=center]:\scriptsize{$v'$ fires only towards $v$}}] (A) at (12.0, -3.50);
	\fill (13.5,-3.5) circle (2pt);
	\coordinate[label={[align=center]below:\scriptsize{$t_0(v'') = \tfrac{t_0(v')}q$} }] (A) at (13.5, -3.50);

	\begin{scope}[shift={(0,0), scale=0.69}]
\node[circle,fill = black,scale = 0.3] (A) at (1,0) {};
\node[circle,fill = black,scale = 0.3] (v) at (2,0) {};
\node[circle,fill = black,scale = 0.3] (B) at (4,2) {};
\node[circle,fill = black,scale = 0.3] (C) at (4,0.5) {};
\node[circle,fill = black,scale = 0.3] (E) at (4,-0.5) {};
\node[circle,fill = black,scale = 0.3] (F) at (4,-2) {};

\node[circle,fill = black,scale = 0.3] (B1) at (5,2.6) {};
\node at (4.0,2.2) {${v'}$};
\node at (4.8,2.6) {$\sss1$};
\node at (4.8,2.25) {$\sss1$};
\node at (4.8,1.95) {$\sss1$};
\node at (4.8,1.6) {$\sss1$};
\node at (4.6,2.25) {\scalebox{.5}{$\vdots$}};
\node at (4.6,1.8) {\scalebox{.5}{$\vdots$}};
\node at (4.4,0.5) {$\dots$};
\node at (4.4,0.4) {$\ddots$};
\node at (4.4,0.8) {$\iddots$};

\node at (4.4,0.5) {$\dots$};
\node at (4.4,0.4) {$\ddots$};
\node at (4.4,0.8) {$\iddots$};

\node at (4.4,-0.5) {$\dots$};
\node at (4.4,-0.6) {$\ddots$};
\node at (4.4,-0.2) {$\iddots$};

\node at (4.4,-2) {$\dots$};
\node at (4.4,-2.1) {$\ddots$};
\node at (4.4,-1.7) {$\iddots$};

\node[circle,fill = black,scale = 0.3] (B3) at (5,2.2) {};
\node[circle,fill = black,scale = 0.3] (B5) at (5,1.8) {};
\node[circle,fill = black,scale = 0.3] (B5) at (5,1.4) {};
\draw (4,2)--(5,2.6);
\draw (4,2)--(5,2.2);
\draw (4,2)--(5,1.8);
\draw (4,2)--(5,1.4);

\draw[very thick] (1,0)--(2,0);
\draw (2,0)--(4,2);
\node at (3,0.7) {$\vdots$};
\node at (3,-0.5) {$\vdots$};
\draw (2,0)--(4,0.5);
\draw (2,0)--(4,-0.5);
\draw (2,0)--(4,-2);
\node at (1.4,0.15) {$m$};
\node at (2,-0.25) {$v$};
\node at (1,-0.25) {$\bar v$};
\node at (3.3,1.5) {$1$};
\node at (3.3,0.5) {$1$};
\node at (3.3,-0.15) {$1$};
\node at (3.3,-1.1) {$1$};
		\draw[decorate, decoration = {brace, mirror}] (5,-2.5)--(1,-2.5);
\end{scope}
	\begin{scope}[shift={(4.2,0), scale=0.69}]
\node[circle,fill = black,scale = 0.3] (A) at (1,0) {};
\node[circle,fill = black,scale = 0.3] (v) at (2,0) {};
\node[circle,fill = black,scale = 0.3] (B) at (4,2) {};
\node[circle,fill = black,scale = 0.3] (C) at (4,0.5) {};
\node[circle,fill = black,scale = 0.3] (E) at (4,-0.5) {};
\node[circle,fill = black,scale = 0.3] (F) at (4,-2) {};

\node[circle,fill = black,scale = 0.3] (B1) at (5,2.6) {};
\node at (4.0,2.2) {${v'}$};
\node at (4.8,2.6) {$\sss1$};
\node at (4.8,2.25) {$\sss1$};
\node at (4.8,1.95) {$\sss1$};
\node at (4.8,1.6) {$\sss1$};
\node at (4.6,2.25) {\scalebox{.5}{$\vdots$}};
\node at (4.6,1.8) {\scalebox{.5}{$\vdots$}};
\node at (4.4,0.5) {$\dots$};
\node at (4.4,0.4) {$\ddots$};
\node at (4.4,0.8) {$\iddots$};

\node at (4.4,0.5) {$\dots$};
\node at (4.4,0.4) {$\ddots$};
\node at (4.4,0.8) {$\iddots$};

\node at (4.4,-0.5) {$\dots$};
\node at (4.4,-0.6) {$\ddots$};
\node at (4.4,-0.2) {$\iddots$};

\node at (4.4,-2) {$\dots$};
\node at (4.4,-2.1) {$\ddots$};
\node at (4.4,-1.7) {$\iddots$};

\node[circle,fill = black,scale = 0.3] (B3) at (5,2.2) {};
\node[circle,fill = black,scale = 0.3] (B5) at (5,1.8) {};
\node[circle,fill = black,scale = 0.3] (B5) at (5,1.4) {};
\draw (4,2)--(5,2.6);
\draw (4,2)--(5,2.2);
\draw (4,2)--(5,1.8);
\draw (4,2)--(5,1.4);

\draw[line width = 3pt] (1,0)--(2,0);
\draw[very thick] (2,0)--(4,2);
\node at (3,0.7) {$\vdots$};
\node at (3,-0.5) {$\vdots$};
\draw[very thick] (2,0)--(4,0.5);
\draw (2,0)--(4,-0.5);
\draw (2,0)--(4,-2);
		\node at (1.5,0.2) {};
\node at (2,-0.25) {$v$};
\node at (1,-0.25) {$\bar v$};
\node at (3.3,1.6) {$m$};
\node at (3.3,0.5) {$m$};
\node at (3.3,-0.15) {$?$};
\node at (3.3,-1.1) {$?$};
		\draw[decorate, decoration = {brace, mirror}] (4,-2.5)--(1,-2.5);
	\end{scope}
	\begin{scope}[shift={(8.5,0),scale=0.69}]
\node[circle,fill = black,scale = 0.55] (A) at (1,0) {};
\node[circle,fill = black,scale = 0.55] (v) at (2,0) {};
\node[circle,fill = black,scale = 0.3] (B) at (4,2) {};
\node[circle,fill = black,scale = 0.3] (C) at (4,0.5) {};
\node[circle,fill = black,scale = 0.3] (E) at (4,-0.5) {};
\node[circle,fill = black,scale = 0.3] (F) at (4,-2) {};

\node[circle,fill = black,scale = 0.3] (B1) at (5,2.6) {};
\node at (4.0,2.2) {${v'}$};
\node at (5.2,2.7) {$\sss{v''}$};
\node at (4.7,2.6) {$\sss{m}$};
\node at (4.7,2.25) {$\sss{m}$};
\node at (4.7,1.95) {$\sss{?}$};
\node at (4.7,1.6) {$\sss{?}$};
\node at (4.6,2.25) {\scalebox{.5}{$\vdots$}};
\node at (4.6,1.8) {\scalebox{.5}{$\vdots$}};
\node at (4.4,0.5) {$\dots$};
\node at (4.4,0.4) {$\ddots$};
\node at (4.4,0.8) {$\iddots$};

\node at (4.4,0.5) {$\dots$};
\node at (4.4,0.4) {$\ddots$};
\node at (4.4,0.8) {$\iddots$};

\node at (4.4,-0.5) {$\dots$};
\node at (4.4,-0.6) {$\ddots$};
\node at (4.4,-0.2) {$\iddots$};

\node at (4.4,-2) {$\dots$};
\node at (4.4,-2.1) {$\ddots$};
\node at (4.4,-1.7) {$\iddots$};

\node[circle,fill = black,scale = 0.3] (B3) at (5,2.2) {};
\node[circle,fill = black,scale = 0.3] (B5) at (5,1.8) {};
\node[circle,fill = black,scale = 0.3] (B5) at (5,1.4) {};
\draw[thick] (4,2)--(5,2.6);
\draw[thick] (4,2)--(5,2.2);
\draw (4,2)--(5,1.8);
\draw (4,2)--(5,1.4);

\draw[line width = 5pt] (1,0)--(2,0);
\draw[line width = 3pt] (2,0)--(4,2);
\node at (3,0.7) {$\vdots$};
\node at (3,-0.5) {$\vdots$};
\draw[line width = 3pt] (2,0)--(4,0.5);
\draw (2,0)--(4,-0.5);
\draw (2,0)--(4,-2);
		\node at (1.5,0.3) {};
\node at (2,-0.25) {$v$};
\node at (1,-0.25) {$\bar v$};
		\node at (3.3,1.8) {};
		\node at (3.5,0.7) {};
\node at (3.3,-0.15) {$?$};
\node at (3.3,-1.1) {$?$};
		\draw[decorate, decoration = {brace, mirror}] (5.5,-2.5)--(1.8,-2.5);
	\end{scope}

\end{tikzpicture}
\caption{Timeline}\label{f:time}
\end{figure}

\item Finally, after {these children} of $v$ ``raise'' their own edge-offspring to the tally $m$ {(and meanwhile the edges joining them to $v$ get most of the reinforcement from them)}, the configuration starts to converge to an equilibrium. 

Namely, each vertex mainly reinforces the edge to its parent, with a small (but linear in time) portion going to its edge-offspring. The limiting configuration contains a Galton-Watson tree (associated to the behaviour in the P\'olya urns), with the expected number of descendants at least $ \frac45\cdot 5 > 1$. Thus this G-W tree is infinite with positive probability.
\end{enumerate}
Such a branching process actually starts from any given point with a positive (and bounded away from~$0$) probability, thus ensuring the appearance of such a tree (and, moreover, of an infinite number of such trees) almost surely.


We will formalise these arguments in Section~\ref{s:tree}. Proposition~\ref{p:induction} therein states what we need to make an induction argument, comparing a set of surviving edges to a G-W tree. The induction argument itself is performed in Proposition~\ref{p:cryst}; meanwhile, Proposition~\ref{p:somewhere} provides the base for such an induction (roughly speaking, stating, that every part of a tree has a chance to be disconnected from its parents and to become a root of such dynamics). 
\end{proof}

We deduce Theorem~\ref{t:a} from Theorem~\ref{t:tree}. Though the deduction is almost immediate due to the last conclusion of the theorem (and the example shown on Figure~\ref{f:ex-th-2}), one has to check that the WARM process on the constructed graph is well-defined (that is no more granted, as it is no longer a graph with bounded degrees). This is done in Section~\ref{sec:thmsproof}.

\section{The WARM on trees}\label{s:tree}	
\subsection{Initial definitions}
Let $(\Omega, \FF,\P)$ denote a probability space on which for each $v\in V$, ${\bs T}_v = (T_{i,v})_{i \in \N}$ are the points of a Poisson processes with rate $\lambda_v = q^{d(v)}$, and $({\bs T}_v)_{v \in V}$ are independent of each other, and on which $(U_{i,v})_{i \in \N,v\in V}$ are i.i.d.~standard uniform random variables that are independent of $({\bs T}_v)_{v \in V}$. {Let $\bs{U}_v=(U_{i,v})_{i \in \N}$.}

From each non-root vertex $v\in V$, we fix a labelling of the incident edges $e\sim v$ as~$0_v,\dots ,n_v$, where $0_v$ always denotes the edge to the parent of~$v$.

Starting with $N_0(e) = 1$ for every $e \in E$, the process evolves as follows (compare with \eqref{Upolya}): 
At the $j$th firing moment $t = T_{j,v}$ at vertex $v$, given the tallies of edges $\{N_{t-}(e)\}_{e\sim v}$ we select edge $i_v$ if 
\begin{align}
\label{choice}
\dfrac{\sum_{i'<i}N_{t-}(i'_v)^\a}{\sum_{i'=0}^{n} N_{t-}(i'_v)^\a}<U_{j,v} \le \dfrac{\sum_{i'\le i}N_{t-}(i'_v)^\a}{\sum_{i'=0}^{n} N_{t-}(i'_v)^\a},
\end{align}
and we set $N_t(i_v) = N_{t-}(i_v) + 1$ (while $N_t(e) = N_{t-}(e)$ for $i_v\ne e\sim v$).
As almost surely there are no infinite chains of dependence \cite{compass}, this leads to a well-defined process. In particular, \eqref{choice} implies that the edge $0_v$ joining $v$ to its parent is reinforced by the $j$-th firing at $v$ if and only if 
\begin{equation}
\label{Uparent}
0<U_{j,v} { \le } \frac{N_{T_{i,v}-}(0_v)^\a}{\sum_{e\sim v} N_{T_{i,v}-}(e)^\a}.
\end{equation}

Now, for $\a > 1$ let us choose and fix $m$ and $n$ satisfying the conclusions of Theorem~\ref{t:Polya}. Also, let us consider what happens after a large finite number~$M$ of P\'olya urn steps. Namely (see Corollary~\ref{cor:Polya}), for all sufficiently large $M$ with probability greater than $4/5$, after $M$ steps of the $\a$-P\'olya urn process, starting with $(m,1,\dots,1)$ we get $(x_0,x_1,\dots,x_n)$ with 
$$
\forall i = 1,\dots, n \quad x_i\le 2m
$$
and 
$$
\#\{i = 1,\dots, n \mid x_i = m\} \ge 5.
$$
We will fix such a sufficiently large $M$ (satisfying also some other lower bounds that are to be determined later). We also fix a sufficiently small value $\delta_0 \in (0,{\frac 12}) $ such that 
\begin{equation}\label{eq:delta-def}
\left(\frac{3\delta_0}{1-2 \delta_0}\right)^\a < \frac{\delta_0}2.
\end{equation}

To state properties and propositions below we will need, in addition to $n,m$ and $M$, to choose some $\eps,q>0$. The order, in which we choose these parameter values is as follows: Fix $\a>1$. We choose $n,m$ as in Theorem \ref{t:Polya}, and $M\ge M_0$ as in Corollary \ref{cor:Polya}. Then, $\eps\in (0,1)$ is chosen to be sufficiently small, and finally $q>0$ is chosen to be sufficiently small (compare with Lemma~\ref{l:good} below). However, for the intermediate propositions we will immediately assume the following inequalities (that can be guaranteed when the choice is made in this order):
\begin{equation}
\label{Mmn}
M>4mn
\end{equation}
\begin{equation}\label{eq:qeM}
q<\frac{\eps^2}M\wedge \frac{\delta_0 \eps^2}n.
\end{equation}

\begin{definition}
	\label{d:good}
On our probability space $(\Omega, \FF,\P)$, for fixed $m,n,M,\vep,q > 0$, a vertex $v\in V^-:=V\setminus\{x_0\}$ is 
\begin{itemize}
\item \emph{P\'olya-regular}, if the variables $(U_{j,v})_{j=1}^M$ are such that the conclusions of Corollary~\ref{cor:Polya} hold for the discrete-time P\'olya urn process defined from \eqref{Upolya} with the variables $(U_j)_{j = 1}^M$ defined by $U_j: = U_{j,v}$, and with starting configuration $(m,1,\dots, 1)$. In this case, the colours $i_v$ that are reinforced (in this single-urn discrete-time process) exactly $m-1$ times are called its \emph{P-children}; 
\item \emph{P\'olya-non-disturbing}, if 
\begin{equation}\label{eq:non-dist}
\forall j = M + 1,\dots, M', \quad U_{j,v} \le 1 - n\left(\frac{2m + \eps^{-2}qj}{j/2}\right)^\a,
\end{equation}
where $M': = \lceil \eps^{-2} q^{-1} M\rceil $.
\item \emph{P\'olya-well behaved}, if
\begin{equation}
	\label{eq:well}
\forall j > M', \quad \#\Big\{i\in \{M' + 1,\dots,j \}:\, U_{i,v} \ge 1 - \frac{\delta_0}2 \Big\} \le \delta_0 j.
\end{equation}
\item \emph{P\'olya-good} if it is P\'olya-regular, P\'olya-non-disturbing and P\'olya-well behaved;
\item \emph{time-regular}, if for all $k\in \N$ the firing times $T_{k,v}$ satisfy 
\[
\eps \frac k{\lambda_v} < T_{k,v} < \eps^{-1} \frac k{\lambda_v}.
\]
\item \emph{time-good}, if all its children (but not necessarily the vertex itself) are time-regular. 
\item \emph{tree-good}, if it is time-good and P\'olya-good.
\end{itemize}
\end{definition}

\begin{remark}\label{r:independent}
Let $\mc T_v=\mc{T}_v(\a,m,n,M,\vep,q)$ denote the event that vertex $v$ is tree-good. Then, $(\mc T_v)_{v \in V^-}$ are independent events since they depend on disjoint sets of independent random variables ($v$ being tree-good depends on $\bs{U}_v$ and the variables $\bs{T}_{v'}$ for children $v'$ of $v$). 
Similarly, for each $v$, the events that $v$ is P\'olya-regular, time-good, P\'olya-non-disturbing, and P\'olya-well behaved are independent.
\end{remark}

For every vertex $v$ fix the times 
\[
t_0(v): = \frac{\eps}{\lambda_v}, \quad t_1(v): = \frac M{\eps \lambda_v}.
\]
By definition, a time-regular vertex $v$ never fires before $t_0(v)$ and fires at least $M$ times by~$t_1(v)$. Since {
\[\dfrac{M}{\eps q}\le \eps \Big\lceil\dfrac M{\eps^2 q}\Big\rceil=\eps M',\]
we also see that if $v$ is time-regular} then for any time-regular descendant $v'$ of $v$, by the time $t_1(v')$ the vertex $v$ fires at most $M'$ times; this motivates the occurrence of $M'$ in the conditions above.

\begin{definition}
Let $\bar v$ denote the parent of the vertex $v\in V^-$. We call $\bar v$ for $v$ a
\begin{itemize}
\item \emph{nurturing parent}, if by the moment $t_0(v)$ the edge $\bar vv$ is reinforced by $\bar v$ exactly $m-1$ times;
\item \emph{non-disturbing parent}, if between $t_0(v)$ and $t_1(v)$ the edge $\bar vv$ is never reinforced by~$\bar v$.
\item \emph{good parent}, if $\bar v$ is both nurturing and non-disturbing.
\end{itemize}
\end{definition}

\begin{remark}
The property of being a good parent depends on the Poisson clocks and uniform random variables throughout the tree. Nevertheless we will develop an induction that works with \emph{independent} random variables at each vertex: see Proposition~\ref{p:induction}.
\end{remark}

\subsection{The crystallisation tree}

For $v\in V$, let $D(v)$ denote $v$ and all its descendants.

\begin{definition}[Crystallisation tree]
\label{def:crystal}
	The \emph{crystallisation tree} $\Cr_v$ rooted at a tree-good vertex $v \in V^-$ is defined in the following way. We start with the vertex $v$. Then, for each vertex that we have already added,
we append all tree-good $P$-children of that vertex (as well as the edges joining these vertices to their parent).
The procedure is then (possibly infinitely) repeated. If $v$ is not tree-good then the crystallisation tree $\Cr_v$ rooted at $v$ is defined to be empty.
\end{definition}
{
\begin{remark}
\label{rem:crystaldepend}
The crystallisation tree $\Cr_v$ depends only on the variables $(\bs{U}_u)_{u \in D(v)}$ and $(\bs{T}_u)_{u \in D(v)\setminus \{v\}}$
\end{remark}
}

The reason for considering the crystallisation tree can essentially be seen from the following two propositions (whose proofs we are postponing to Sections~\ref{s:proof-cryst} and \ref{s:proof-GW} respectively).
\begin{proposition}
	\label{p:cryst}
Let $v\in V^-$ be a time-regular, tree-good vertex with a good parent~$\bar v$. Then, the crystallisation tree $\Cr_v$ rooted at $v$ is contained in $\mE_+$.
\end{proposition}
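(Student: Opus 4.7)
The plan is to prove Proposition~\ref{p:cryst} by induction on the depth in the crystallisation tree, with the invariant that every vertex $u$ appended to $\Cr_v$ is itself tree-good, time-regular, and has a good parent. Once this invariant is verified at a generic such $u$, the same local analysis carried out for $v$ applies verbatim at $u$, and every edge $0_u$ for $u \in \Cr_v \setminus \{v\}$ belongs to $\mE_+$. The inductive step therefore boils down to two conclusions about a generic tree-good, time-regular $u$ with a good parent: first, the edge $0_u$ receives a positive linear fraction of the firings of $u$ in the long run (so $0_u \in \mE_+$); second, $u$ is a good parent of every tree-good P-child $u'$ appended below $u$ in $\Cr_v$, i.e.\ $u$ contributes exactly $m - 1$ reinforcements to $uu'$ by time $t_0(u')$ and no reinforcement during $[t_0(u'), t_1(u')]$.

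I would decompose the analysis at $u$ into four phases. Before $t_0(u)$, time-regularity prevents $u$ from firing while its nurturing parent raises $N(0_u)$ to $m$. On $[t_0(u), t_1(u)]$, time-regularity yields at least $M$ firings of $u$; the non-disturbing parent keeps $\bar u$ idle on $0_u$, so these firings are exactly an $\a$-P\'olya urn started at $(m, 1, \dots, 1)$ and P\'olya-regularity invokes Corollary~\ref{cor:Polya}. Since at most $n(2m - 1)$ reinforcements fall on non-parent edges, the assumption \eqref{Mmn} gives the crucial bound $N_{T_{M,u}}(0_u) \ge M - n(2m - 1) > M/2$, providing the base case $N(0_u) \ge j/2$ at $j = M$ for the next phase.

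In the third phase, firings $j \in (M, M']$ of $u$, I would prove by induction on $j$ that $0_u$ is again reinforced, hence $N(0_u)$ keeps up with $j/2$. For the key estimate I bound each non-parent tally $N(i_u) = N(uu')$ by $2m$ from $u$'s own contributions (which stopped after firing $M$ by the inductive claim) plus $\eps^{-2} q j$ from $u'$'s contributions (bounded via time-regularity of $u'$: by time $T_{j,u} \le \eps^{-1} j/\lambda_u$ the child $u'$ fires at most $q \lambda_u T_{j,u}/\eps \le \eps^{-2} q j$ times). Thus the probability of not selecting $0_u$ at firing $j$ is at most $n\bigl((2m + \eps^{-2} q j)/(j/2)\bigr)^\a$, and P\'olya-non-disturbing \eqref{eq:non-dist} is precisely the statement that $U_{j,u}$ avoids this tail. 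A direct check using \eqref{eq:qeM} shows that firings $j \le M'$ of $u$ cover the whole interval $[0, t_1(u')]$ for every child $u'$ (namely $T_{M',u} \ge t_1(u')$), so this phase simultaneously verifies both halves of the good-parent property of $u$ with respect to each P-child $u'$: the tally of $uu'$ stays at $m$ from $t_1(u)$ to $t_0(u')$, and is untouched on $[t_0(u'), t_1(u')]$.

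The fourth phase, $j > M'$, is where one upgrades the ``only $0_u$'' conclusion to the linear reinforcement $0_u \in \mE_+$. Here $N(0_u) \ge M'/2$ and, by induction on the generations of $\Cr_v$, the children of $u$ fire at rate $q\lambda_u$ each and return only a small positive fraction of their firings to the edges $uu'$, so $\max_i N(i_u)$ grows linearly in time but with a rate controllable by $q$. I expect the main obstacle to be closing a self-consistent estimate of the form $n\max_i N(i_u)^\a / N(0_u)^\a \le \delta_0/2$, for which the algebraic condition~\eqref{eq:delta-def} is the precise input needed; condition~\eqref{eq:well} on the $U_{j,u}$ then gives that a linear $(1 - \delta_0)$-fraction of firings in $(M', j]$ reinforce $0_u$, yielding $0_u \in \mE_+$ and closing the induction through the entire tree.
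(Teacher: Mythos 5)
Your proposal matches the paper's proof essentially verbatim: the paper isolates your two inductive conclusions as a separate statement (Proposition~\ref{p:induction}) and proves it by the same four-phase analysis --- no firing before $t_0(v)$ while the nurturing parent raises the tally to $m$; the P\'olya urn window up to $T_{M,v}$ invoking Corollary~\ref{cor:Polya} with the bound $N(0_v)\ge M-n(2m-1)>M/2$; the inner induction on firings $j=M+1,\dots,M'$ using the child-tally bound $2m+\eps^{-2}qj$ and \eqref{eq:non-dist}; and the $j>M'$ phase combining \eqref{eq:well} with \eqref{eq:delta-def} to get a $(1-\delta_0)$-fraction of reinforcements on $0_v$ --- followed by the same induction over the crystallisation tree. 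The only cosmetic difference is in the last phase, where the paper needs no induction over generations of $\Cr_v$ to control the children's contributions: time-regularity of the children (guaranteed by $v$ being time-good, whether or not they lie in $\Cr_v$) already caps their total firings by $\eps^{-2}qj$, and the $\delta_0 j$ term accounts for the firings of $v$ itself that may stray from $0_v$.
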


\begin{proposition}
\label{p:GW}
For all $\a,m,n$ as in Theorem \ref{t:Polya}, the following holds. 
	For all sufficiently large $M = M(n, \a) \ge 1$ and sufficiently small $\e = \e(M, n, \a) > 0$ and $q = q(\e, M, n, \a) > 0$, conditionally on the event that a vertex $v\in V^-$ is tree-good, its crystallisation tree $\Cr_v$ is a \emph{super-critical} Galton-Watson tree (with one vertex at the initial level).
\end{proposition}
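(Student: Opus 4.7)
\noindent\textbf{Proof plan for Proposition~\ref{p:GW}.}

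The plan is to identify $\Cr_v$ as a homogeneous Galton-Watson tree whose mean offspring exceeds~$1$. The first step is to establish the branching structure. By construction every vertex $u\in\Cr_v$ is tree-good, and its offspring in $\Cr_v$ are its tree-good P-children. The set of P-children of $u$ is determined by $(U_{j,u})_{j=1}^M\subset \bs U_u$, while by Remark~\ref{r:independent} each child $w$ being tree-good depends only on $\bs U_w$ and $(\bs T_{w'})_{w'\text{ child of }w}$. An inspection shows that the random variables determining the offspring count at $u$ (namely $\bs U_u$, the $\bs U_w$ for $w$ a child of $u$, and $\bs T_{w'}$ for $w'$ a grandchild of $u$) are pairwise disjoint across distinct vertices $u\in\Cr_v$, and also disjoint from the variables that decide which vertices lie in $\Cr_v$ in their respective subtrees. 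Consequently, conditional on $u$ being tree-good and having exactly $K$ P-children, the number of tree-good P-children of $u$ is $\mathrm{Bin}(K,p)$ with $p:=\P(\mc T_v)$ independent of $v$; and the offspring counts at distinct vertices of $\Cr_v$ are independent. This yields the desired homogeneous Galton-Watson structure rooted at $v$.

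Since P\'olya-regularity (implied by tree-goodness) forces $K\ge 5$, the mean offspring is at least $5p$, so it remains to show $p>1/5$ under the prescribed nested choice $M\to\infty$, then $\eps\to 0$, then $q\to 0$. By Remark~\ref{r:independent} the four defining events of tree-goodness are mutually independent, and I would bound each separately: Corollary~\ref{cor:Polya} gives $\P(\text{P\'olya-regular})>4/5$ directly; time-goodness follows from a Chernoff bound on the Gamma-distributed $\lambda_w T_{k,w}$, summed geometrically over $k\ge 1$ and union-bounded over the $n$ children of $v$, vanishing as $\eps\to 0$; P\'olya-non-disturbingness follows from a union bound on the Bernoulli tail in~\eqref{eq:non-dist} over $M<j\le M'$, split via $(a+b)^\a\le 2^\a(a^\a+b^\a)$ into a piece of order $n(4m)^\a/M^{\a-1}$ and one of order $nM\eps^{-2-2\a}q^{\a-1}$, both vanishing in the prescribed order of limits because $\a>1$; and P\'olya-well-behavedness follows from Hoeffding applied to $\1\{U_{i,v}\ge 1-\delta_0/2\}$, yielding a tail $\P(S_j>\delta_0 j)\le e^{-cj}$ and hence a total of order $e^{-cM'}$ after summing geometrically in $j>M'$. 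Combining these gives $p$ arbitrarily close to $4/5$, and in particular $5p>1$.

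The main obstacle is the bookkeeping needed to make all four tail estimates simultaneously small, which dictates the nested order in which the parameters must be chosen -- most notably for P\'olya-non-disturbingness, whose two contributions are controlled by \emph{separate} parameters ($M$ and $q$), so both limits must be exhausted. Once $p>1/5$ is in hand, the bound $\E[\mathrm{offspring}]\ge 5p>1$ immediately establishes super-criticality of the Galton-Watson tree $\Cr_v$.
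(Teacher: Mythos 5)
Your proposal follows essentially the same route as the paper: the Galton--Watson structure is obtained from the same independence bookkeeping (note that the variables are not quite ``pairwise disjoint'' --- each $\bs U_w$ is used twice, once for tree-goodness of $w$ and once for its P-children --- but, as in the paper, conditioning on tree-goodness resolves this and yields exactly your $\mathrm{Bin}(K,p)$ step), and supercriticality follows from Corollary~\ref{cor:Polya} together with showing that the remaining goodness events have probability tending to $1$ in the nested limit, which is precisely the paper's Lemma~\ref{l:good} and Corollary~\ref{cor:P2}. Your concentration estimates differ only cosmetically (Chernoff/Hoeffding where the paper uses the strong law of large numbers plus a finite adjustment), and your split of the non-disturbing bound into two terms controlled separately by $M$ and by $q$ is exactly the paper's.
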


In order to apply Proposition~\ref{p:cryst}, one needs to ensure an appropriate ``starting point'' for such a tree. 
 To this end, let us introduce the following notation: let
\[
B(v):=\{v\} \cup \{u:u \sim v\} 
\] 
be the union of the vertex $v$ with its neighbours, and let 
\[
\FF_v := \s\big({\bs U}_v, (\bs{T}_u)_{u \in B(v)}\big)
\]
be the $\s$-algebra generated by the Poisson clock rings for these vertices and the uniform variables associated to the vertex itself. Also, denote by $D(v)$ the tree of descendants of $v$ (including $v$). Then, for a vertex $v$ and its parent $\bar v$, the events associated to the construction of the crystallisation tree $\Cr_v$ {(that is, the tree goodness of $v$ and its descendants, which depends only on $(T_{j,u})_{j \in \N, u \in D(v)\setminus \{v\}}$ and $(U_{j,u})_{j \in \N, u\in D(v)}$)}, are independent from these belonging to $\FF_{\bar v}$.

For $v\in V$ such that $d(v)\ge 2$, let $A_{\bar v}$ be the event that all the following occur: 
\begin{itemize}
\item no vertices among the neighbours of $\bar v$ fire before $t_0(v)$, {and $v$ is time-regular}
\item $\bar v$ fires by $t_0(v)$ exactly {$m-1$ times}, and after that doesn't fire at all before $t_1(v)$;
\item all these {$m-1$} firings at $\bar v$ reinforce the edge $0_v=\bar v v$
\end{itemize}
\begin{proposition}[Initial spark]
	\label{p:somewhere}
	Let $\a, M,\eps,q$ be fixed.	Then, there exists $\pii> 0$ such that for any $v\in V$ such that $d(v)\ge 2$, 
	\begin{enumerate}[\normalfont (i)]
	\item $A_{\bar v}\in \FF_{\bar v}$, and 
	\item on the event $A_{\bar v}$, $v$ is time-regular and $\bar v$ is a good parent for~$v$, and
	\item
	 \[
\P(A_{\bar v})\ge \pii.
\]
\end{enumerate}
\end{proposition}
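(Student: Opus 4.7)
The plan for Proposition~\ref{p:somewhere} is to write $A_{\bar v}$ as an intersection of four events depending on disjoint (hence independent) collections of random variables, and then bound each factor separately. Parts (i) and (ii) are essentially bookkeeping. For measurability in (i), each of the three bullets defining $A_{\bar v}$ involves only $(\bs T_u)_{u \in B(\bar v)}$ and $\bs U_{\bar v}$. The only delicate point is the third bullet, since the choices at $\bar v$ are made using \eqref{choice} and depend in principle on tallies at all edges incident to $\bar v$; but on the event of the first two bullets, no neighbour of $\bar v$ has fired in $[0, t_0(v)]$, so during $\bar v$'s first $m-1$ firings those tallies are a deterministic function of $\bs U_{\bar v}$ and $\bs T_{\bar v}$ alone. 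For (ii), on $A_{\bar v}$ the statement ``$v$ time-regular'' is direct; ``$\bar v$ nurturing'' is the content of bullets 2 and 3 combined; and ``$\bar v$ non-disturbing'' follows from bullet 2, which forbids any firing of $\bar v$ in $(t_0(v), t_1(v))$.

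For (iii), set $E_1 = \{v \text{ is time-regular}\}$, $E_2 = \{\text{no } u \sim \bar v \text{ with } u \ne v \text{ fires in } [0, t_0(v)]\}$, $E_3 = \{\bar v \text{ fires exactly } m-1 \text{ times in } [0, t_0(v)] \text{ and none in } (t_0(v), t_1(v)]\}$, and $E_4 = \{\text{in the } \a\text{-P\'olya urn on } n+1 \text{ colours driven by } \bs U_{\bar v} \text{ from the initial state } (1,\dots,1),\text{ the first } m-1 \text{ selections are all of coordinate } 0_v\}$. Since $E_1$ implies $T_{1,v} > \eps/\lambda_v = t_0(v)$, one checks that $E_1 \cap E_2 \cap E_3 \cap E_4 = A_{\bar v}$. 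These four events are measurable with respect to $\bs T_v$, $(\bs T_u)_{u \sim \bar v, u \ne v}$, $\bs T_{\bar v}$, and $\bs U_{\bar v}$ respectively; by the independence of these collections, $\P(A_{\bar v}) = \prod_{i=1}^4 \P(E_i)$.

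The key observation for positivity uniformly in $v$ is that although the firing rates $\lambda_u = q^{d(u)}$ grow with $v$, the times $t_0(v)$ and $t_1(v)$ scale as $\lambda_v^{-1}$, so the expected number of firings in each relevant window is a function of $\eps, q, M$ only (e.g.\ $\eps/q^2$ at the parent of $\bar v$, $\eps$ at each sibling of $v$, $\eps/q$ at $\bar v$ in $[0, t_0(v)]$, and order $M/(\eps q)$ at $\bar v$ in $(t_0(v), t_1(v)]$), independent of $v$. By a time rescaling $\P(E_1)$ reduces to the probability that a rate-one Poisson process satisfies $\eps k < T_k < k/\eps$ for every $k \in \N$, which is strictly positive by the strong law; $\P(E_2)$ and $\P(E_3)$ are explicit positive products of Poisson probabilities $e^{-c}$ and $e^{-c}c^{m-1}/(m-1)!$; and $\P(E_4) = \prod_{k=1}^{m-1} k^\a/(k^\a + n) > 0$, because at step $k$ of the sub-urn the tally at $0_v$ is $k$ while each of the other $n$ tallies equals $1$. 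Taking $\pii$ equal to this product yields the claim.

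The only genuine obstacle is the justification that $E_4$ can be expressed as a function of $\bs U_{\bar v}$ alone, independent of the rest of the randomness; this is exactly what the conjunction $E_1 \cap E_2$ purchases, by freezing all tallies at the edges incident to $\bar v$ until after $\bar v$'s first $m-1$ firings.
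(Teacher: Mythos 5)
Your proof is correct and follows essentially the same strategy as the paper's: identify that the three bullet events of $A_{\bar v}$, once $v$-time-regularity and the no-neighbour-firing condition are imposed, depend on disjoint collections of random variables (clocks at $v$, at the other neighbours of $\bar v$, at $\bar v$ itself, and the uniforms $\bs U_{\bar v}$), so the probability factors into $v$-independent positive terms. Your explicit product $\P(E_4)=\prod_{k=1}^{m-1}k^\a/(k^\a+n)$ and the Poisson means $\eps/q^2,\eps,\eps/q,M/(\eps q)$ are a welcome quantification of what the paper only asserts qualitatively, but the underlying argument is the same.
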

\begin{proof}
{The event that \emph{no vertices among the neighbours of $\bar v$ fire before $t_0(v)$} depends only on $(T_{1,u})_{u \in B(\bar v) \setminus \{\bar v\}}$. The event that \emph{$v$ is time-regular} depends only on $\bs{T}_v$ (and on this event $v$ does not fire before $t_0(v)$). The event that \emph{$\bar v$ fires by $t_0(v)$ exactly {$m-1$ times}, and after that doesn't fire at all before $t_1(v)$} depends only on $(T_{i,\bar v})_{i=1}^m$. If these first two events occur then the occurrence of the event that \emph{all these {$m-1$} firings at $\bar v$ reinforce the edge $0_v=\bar v v$} depends only on $(U_{j,\bar v})_{j=1}^{m-1}$ (this condition becomes the condition that a P\'olya urn with the initial condition $(1,\dots, 1)$ defined from the $\bs{U}_{\bar v}$ variables reinforces a given fixed colour on each of the first $m-1$ occasions). Therefore $A_{\bar v}\in \mc{F}_{\bar v}$.

The claim (ii) is immediate from the definition of $A_{\bar v}$ and the definition of good parent.

The probability that the first two events (i.e.~the first two bullet points above) in the definition of $A_{\bar v}$ occur is positive and does not depend on $v$, since the products $t_0(v) \lambda_v=\eps$ and $t_1(v)\lambda_v=\frac{M}{\eps}$ do not depend on $v$.  Meanwhile, conditionally on these events, the last event in $A_{\bar v}$ has positive probability that again does not depend on~$v$ by the above urn observation. Thus $\P(A_{\bar v})>0$ and this probability does not depend on $v$.}
\end{proof}

Next, we need many such ``initial sparks'', where a Galton-Watson tree can be started independently. {To verify this, let us use the labelling scheme where the children of the root are labelled $1,\dots, n$ and for any labelled vertex $v$ of the tree that is not the root, its children are labelled $v1,\dots, vn$.
\begin{lemma}\label{l:independent}
There exist vertices $(v_{(k)})_{k \in \N}$ such that the sets $B(\bar v_{(k)}) \cup D(v_{(k)})$ are disjoint.
\end{lemma}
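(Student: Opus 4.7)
The plan is to exhibit the sequence explicitly using the labelling scheme introduced just before the lemma. For each $k \in \N$ I would take $v_{(k)}$ to be the vertex whose label is $1^k 2 1$ (that is, $k$ consecutive $1$s followed by $2$ and then $1$); its parent is $\bar v_{(k)} = 1^k 2$. Consequently
\[
B(\bar v_{(k)}) \;=\; \{1^k\} \cup \{1^k 2\} \cup \{1^k 2 j : j=1,\dots,n\},
\]
while $D(v_{(k)})$ consists of all vertices whose label has prefix $1^k 2 1$. Note that $d(v_{(k)})=k+2\ge 3$, so in particular $d(v_{(k)})\ge 2$, ensuring compatibility with the hypotheses of Proposition~\ref{p:somewhere} for later use.

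The key observation to verify is a short prefix comparison: every element of $B(\bar v_{(k)}) \cup D(v_{(k)})$ is either the vertex $1^k$ itself (of depth exactly $k$) or has a label beginning with $1^k 2$, in which case its $(k+1)$st symbol equals $2$. For $k<k'$, by contrast, every vertex of $B(\bar v_{(k')}) \cup D(v_{(k')})$ has depth at least $k'>k$ (which immediately excludes $1^k$) and has a label beginning with $1^{k'}$, so its $(k+1)$st symbol equals $1$ (since $k+1\le k'$). Hence the two sets cannot overlap, and pairwise disjointness follows.

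I do not expect any real obstacle here: the argument is purely combinatorial, and the only point to check is that different values of $k$ place the distinguishing `$2$' at different positions along the leftmost ray $1^\infty$, which is immediate from the construction. The lemma then holds with the explicit sequence $(v_{(k)})_{k \in \N}$ just described; a minor sanity check is that each $v_{(k)}$ lies at depth at least $2$, which is ensured by the choice of prefix length $k+2$.
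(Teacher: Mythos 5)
Your construction is correct and takes essentially the same approach as the paper: an explicit choice of vertices via the labelling scheme, with pairwise disjointness checked by locating a distinguishing symbol in the labels. The paper's choice is $v_{(k)}=1\underbrace{2\cdots2}_{k-1}111$, and it obtains disjointness by noting $B(\bar v_{(k)})\cup D(v_{(k)})\subset D(\bar{\bar v}_{(k)})$ for the pairwise disjoint subtrees rooted at $\bar{\bar v}_{(k)}=1\underbrace{2\cdots2}_{k-1}1$; your direct two-case prefix comparison with $v_{(k)}=1^k21$ is equally valid.
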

\begin{proof}
	For $k \in \N$ let $v_{(k)}$ denote the vertex whose label is 
$
1\underbrace{22\dots 2}_{k-1}111.
$	
Then, 
\[
B(\bar v_{(k)}) \cup D(v_{(k)})\subset D(\bar{\bar v}_{(k)}),
\] 
where $\bar{\bar v}_{(k)}$ is the vertex with the label 
$
1\underbrace{22\dots 2}_{k-1}1.
$	
Clearly $D(\bar{\bar v}_{(k)})$ are disjoint for different $k$ since all descendants of $\bar{\bar v}_{(k)}$ have labels starting with the labelling of $\bar{\bar v}_{(k)}$.
\end{proof}
}

To ensure that almost surely there is an infinite number of infinite connected components, we have to ensure that these can be disconnected from each other. This can be achieved relatively easily in the tree.

%
%

For a vertex $v \in V^-$, let $v_n$ denote the last child of $v$. Also let $
E_v:= E_v^1 \cap E_v^2$, where
\begin{equation}\label{eq:E1}
E_v^1:= \bigcap_{j\ge 0} \left\{U_{j+1,v} \le \frac{(1+(j/n))^\a}{1+(1+(j/n))^\a} \right\},
\end{equation}
\begin{equation}\label{eq:E2}
E_v^2:=\bigcap_{j\ge 0} \left\{U_{j+1,v_n} \ge \frac1{1+(1+(j/n))^\a} \right\}.
\end{equation}
Thus, $E_v$ depends only on $\bs{U}_v$ and ${\bs U}_{v_n}$.

\begin{lemma}[Disconnecting]
\label{lem:disconnect}
There exists $p_0>0$ such that for every vertex $v\in V^-$ 
\begin{enumerate}[\normalfont (i)]
\item\label{i:positive} $\P(E_v)\ge p_0$,
\item\label{i:disconnecting} on the event $E_v$ the edge $vv_n$ is \emph{never} reinforced.
\end{enumerate}
\end{lemma}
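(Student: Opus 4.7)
The plan is to establish (ii) by a joint induction on the firings of $v$ and $v_n$ showing that on $E_v$ the tally $N_t(vv_n)$ remains at $1$ for all time, and then to obtain (i) by computing an explicit convergent infinite product. The key observation is that $vv_n$ can be reinforced only by a firing at $v$ (where it appears as the edge $n_v$) or at $v_n$ (where it appears as the edge $0_{v_n}$), so it suffices to control what the uniforms $\bs U_v$ and $\bs U_{v_n}$ do at these firings.

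For the inductive step in (ii), suppose $N_{t-}(vv_n)=1$ just before the $(j+1)$th firing of $v$. The inductive hypothesis forces each of $v$'s first $j$ firings to have reinforced one of $0_v,\dots,(n-1)_v$, so $\sum_{i'<n}N_{t-}(i'_v)\ge n+j$; Jensen's inequality applied to the convex function $x\mapsto x^\a$ (using $\a>1$) then yields
\[
S:=\sum_{i'<n}N_{t-}(i'_v)^\a\ \ge\ n(1+j/n)^\a\ \ge\ (1+j/n)^\a.
\]
By \eqref{choice}, the edge $n_v$ is not selected provided $U_{j+1,v}\le S/(S+1)$, and since $x\mapsto x/(x+1)$ is increasing, this is guaranteed by the defining inequality of $E_v^1$. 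The case of a firing at $v_n$ is entirely symmetric: by \eqref{Uparent}, the threshold for $v_n$ to reinforce $0_{v_n}$ is $1/(1+\sum_{i\ge 1}N_{t-}(i_{v_n})^\a)\le 1/(1+(1+j/n)^\a)$, which the defining lower bound of $E_v^2$ places $U_{j+1,v_n}$ above. Possible reinforcements of $0_v,\dots,(n-1)_v$ from outside of $v$ (and similarly for the edges at $v_n$) only enlarge $S$ and so only strengthen the relevant inequality, so these external effects do not disturb the induction.

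For (i), the event $E_v^1$ depends only on the independent i.i.d.\ family $\bs U_v$ and $E_v^2$ only on $\bs U_{v_n}$, with each event an intersection over disjoint coordinates of events of the form $\{U_{j+1,\cdot}\le c_j\}$ or $\{U_{j+1,\cdot}\ge c_j'\}$. Independence therefore yields
\[
\P(E_v)=\P(E_v^1)\,\P(E_v^2)=\left(\prod_{j\ge 0}\frac{(1+j/n)^\a}{1+(1+j/n)^\a}\right)^{\!2}.
\]
Since $\a>1$, $\sum_{j\ge 0}(1+j/n)^{-\a}<\infty$, whence $\sum_{j\ge 0}-\log\!\bigl(1-(1+(1+j/n)^\a)^{-1}\bigr)<\infty$ and the infinite product is strictly positive; the resulting $p_0:=\P(E_v)$ depends on $n$ and $\a$ but not on the vertex $v$. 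The only mild obstacle in the whole argument is the bookkeeping in the induction step to ensure that reinforcements from vertices other than $v$ and $v_n$ do not accidentally work against us, but the monotonicity of $x\mapsto x/(x+1)$ and $x\mapsto 1/(1+x)$ in the appropriate directions makes this immediate.
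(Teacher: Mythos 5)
Your proof is correct and follows essentially the same route as the paper: an induction over the successive firings of $v$ and $v_n$ showing the tally of $vv_n$ stays at $1$, plus a convergent infinite product of probabilities $1-O(j^{-\a})$ for the lower bound $p_0$. The only cosmetic difference is that you lower-bound the competing tallies via Jensen's inequality, whereas the paper uses the pigeonhole observation that some single competing edge has tally at least $1+j/n$; both yield the same bound $\sum_{i'<n}N(i'_v)^\a\ge(1+j/n)^\a$.
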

\begin{proof}
The first claim follows from the fact that we are intersecting independent events of probabilities $(1-O(j^{-\a}))$, with $\a>1$, hence the product of probabilities converges to some positive value $p_0$, that does not depend on the choice of the vertex~$v$.

The second claim is proved by induction, noting that 
	if by the moment of $(j+1)$-st firing at the vertex $v$ the edge $vv_n$ has never been reinforced (neither from $v$, nor from $v_n$), then at least one of the other edges incident to $v$ has tally at least $1+\frac{j}{n}$ (as the vertex $v$ has already fired $j$ times), and the inequality~\eqref{eq:E1} thus implies that the reinforcement will not go to the last edge~$vv_n$. 

	Similarly if by the moment of $(j+1)$-st firing at the vertex $v_n$ the edge $vv_n$ has never been reinforced (neither by $v$, nor by $v_n$), at least one of the other edges starting from $v_n$ has tally at least $1+\frac{j}{n}$ (as the vertex $v_n$ has already fired $j$ times). Again, the inequality~\eqref{eq:E2} thus implies that the reinforcement will not go to the first edge~$vv_n$. 
\end{proof}

\subsection{Proof of Theorems \ref{t:tree} and \ref{t:a}}
\label{sec:thmsproof}
We are now ready to prove our main results (assuming Theorem \ref{t:Polya} and Propositions~\ref{p:cryst} and~\ref{p:GW} that will be proved in the next sections).

\begin{proof}[Proof of Theorem \ref{t:tree}]
	Let $\a>1$ be given. Fix $n,m$ given by Theorem~\ref{t:Polya}.  Further, fix $q, \eps, M$, given by Corollary~\ref{cor:P2}, so that the crystallisation Galton--Watson trees are supercritical. Let $v_{(k)}$ be the vertices given in Lemma \ref{l:independent}, and let $z_{(k)}$ denote the vertex $v_{(k)}n111$, which is a descendant of $v_{(k)}$. For $k \in \N$ consider the events
\begin{align}
J_k:=E_{v_{(k)}}\cap A_{\bar{z}_{(k)}}\cap \{\#\Cr_{z_{(k)}}=\infty\}.
\end{align}	
As noted above Lemma \ref{lem:disconnect}, $E_{v_{(k)}}$ depends only on $\bs{U}_{v_{(k)}}$ and $\bs{U}_{v_{(k)}n}$. The event $A_{\bar{z}_{(k)}}$ depends only on ${\bs U}_{\bar{z}_{(k)}}={\bs U}_{v_{(k)}n11}$ and $(\bs{T}_u)_{u \in B(\bar{z}_{(k)})}$, while the event $\{\#\Cr_{z_{(k)}}=\infty\}$ depends only on $(\bs{U}_u)_{u \in D(z_{(k)})}$ and $(\bs{T}_u)_{u \in D(z_{(k)})\setminus \{z_{(k)}\}}$. Thus these three events are independent.

Now, these events have positive probability due to Lemma 2, Proposition~\ref{p:somewhere}(iii) and Proposition~\ref{p:GW} respectively, and therefore $J_k$ has positive probability. Moreover, the events $(J_k)_{k \in \N}$ are independent for different $k$. Therefore infinitely many $J_k$ occur almost surely. 

By Propositions~\ref{p:cryst} and~\ref{p:somewhere}(ii) the crystallisation trees $\Cr_{z_{(k)}}$ are all subsets of $\mc{E}_+$. They are also disconnected from each other by edges that are never reinforced (these never reinforced edges are obviously in $\mc{E}_+^c$). Since infinitely many $J_k$ occur this implies that there are infinitely many infinite components of $\mc{E}_+$ as claimed in part (i).

Next, note that (ii) follows from the fact that the arguments above for a particular choice of $n,m,q,M,\eps$ are preserved by a small perturbation of~$\a$ (for a fixed $M$ the strict inequality in Corollary~\ref{cor:Polya} is preserved under a small perturbation by continuity).

Finally, note that if $\TT_n\overset{\bs{\lambda}}{\sqsubset} T'$ then since $T'$ contains only finitely many edges that are connected to $\TT_n$ but not in $\TT_n$, the same argument above yields that infinitely of the events $J_k$ occur in that part of $\TT_n$ that is ``after'' these finitely many edges, giving the final claim.
\end{proof}

Let us deduce Theorem \ref{t:a} from Theorem \ref{t:tree}.

\begin{proof}[Proof of Theorem \ref{t:a}]

Part (ii) of Theorem~\ref{t:tree} states that for any $\a\in (1,\infty)$ there exist parameters $(n,q)$ and a neighbourhood $O_\a $ of $\a$ such that the connected components conclusion for the tree $\TT_n$ with $\lambda_v=q^{d(v)}$ holds simultaneously for all $\a'\in O_\a$. Let us extract a countable cover $\{O_{\a_j}\}$ of $(1,\infty)$ by such neighbourhoods, and let $(n_j,q_j)$ be the corresponding parameters. 

Now, take the trees $\TT_{n_j}$ equipped with the respective functions $\lambda_{v,j}:=q_j^{d(v)}$ (where $d$ is the distance to the corresponding root), and let us attach the $j$-th such tree to the point $j$ of the positive half-line. That is, we take a disjoint union
\[
G' := \sqcup_{j \ge 1}\TT_{n_j}
\]
of these trees, equipping it with the function $\lambda_v$ that is the union of the respective functions $\lambda_{v,j}$, and transform it into a connecting graph $G$ by adding an edge from the root of $\TT_{n_j}$ to the root of $\TT_{n_{j+1}}$ (see  Figure~\ref{f:ex-th-2}) for each $j \in \N$.

Then, to complete the proof of Theorem~\ref{t:a} we have only to check that the WARM process on $G$ is well-defined. Indeed, once this is established, for any $\a \in (1,\infty)$ one of the neighbourhoods $O_{\a_j}$ covers~$\a$. By the choice of $O_{\a_j}$, $n_j$ and $q_j$, as $\TT_{n_j}\overset{\bs{\lambda}}{\sqsubset} G$, this implies that the set $\mE_+ $ of linearly reinforced edges almost surely contains (infinitely many) infinite connected components. 

Let us now check that the WARM process is well-defined. Note that we cannot apply \cite[Theorem~1]{compass} directly, as the degrees of the graph $G$ are not bounded. However, as in \cite{compass}, for a WARM process to be well-defined, it suffices to show that almost surely there are no infinite \emph{descending chains}, that is, sequences $(v_j, T_{v_j,k_j})_{j\in\N}$ such that for all $j$ the vertex $v_{j+1}$ is a neighbour of $v_{j}$ and $T_{v_{j+1},k_{j+1}}<T_{v_j,k_j}$. Indeed, the presence of such a chain (in which case the result of $k_j$-th firing at the vertex $v_j$ cannot be resolved without resolving $k_{j+1}$-th firing at the vertex $v_{j+1}$) is the only possible obstacle to the well-definedness of a WARM process; see \cite[Definition~1]{compass} (and the discussion after it).

Assume now that such an infinite chain $(v_j,T_{v_j,k_j})$ exists. Then, either all except for a finite number of vertices $v_j$ are contained in the same tree $\TT_{n}$ for some $n$, or shifts between these trees infinitely many times. In the former case, removing the initial part that is not contained in $\TT_n$, we get an infinite descending chain that consists only of the vertices from this tree. However, this almost surely does not take place; indeed, any of these trees \emph{is} a graph with bounded degrees, hence almost surely contains no infinite descending chains due to~\cite[Proposition~1]{compass}.

On the other hand, if an infinite descending chain changes trees infinitely many times, it contains an infinite sub-chain visiting only the root vertices. This would be an infinite descending chain contained in the half-line graph, that is formed by the roots of all $\TT_n$; again, such an infinite chain almost surely does not exist, as $\N$ is also a graph with bounded degrees. 

As both cases for the infinite descending chain almost surely are impossible, with probability~$1$ there are no such chains, and thus the WARM process on $G$ is well-defined. This concludes the proof of Theorem~\ref{t:a}.
\end{proof}

\subsection{Proof of Proposition \ref{p:cryst}}\label{s:proof-cryst}
Proposition \ref{p:cryst} will be proved by induction.

\begin{proposition}[Main induction step]\label{p:induction}
Assume that the vertex $v$ has a good parent $\bar v$ and that $v$ is tree-good and time-regular. Then: 
\begin{itemize}
\item $v$ is a good parent for all of its P-children, and 
\item 
the edge $0_v=\bar vv$ is reinforced with a frequency at least $\vep \lambda_v/2$, i.e.
\[
\liminf_{t\to\infty} \frac1tN_t(\bar vv) \ge \frac{\eps\lambda_v}2.
\]
\end{itemize}

\end{proposition}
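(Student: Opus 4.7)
The plan is to track, firing by firing, the tallies of all edges incident to $v$ over three consecutive windows determined by $t_0(v)$, $t_1(v)$, and $T_{M',v}$, and then to count. In the first window $[0,t_0(v))$, vertex $v$ does not fire (time-regularity) while $\bar v$ delivers exactly $m-1$ reinforcements to $\bar v v = 0_v$ (nurturing). In the window $[t_0(v), t_1(v)]$, $\bar v$ delivers no further reinforcements (non-disturbing), no child $v'$ of $v$ fires (because $t_0(v') > t_1(v)$ by \eqref{eq:qeM}), and $v$ fires at least $M$ times. Hence $v$'s first $M$ firings evolve exactly as a discrete $\a$-P\'olya urn on $n+1$ colours with initial state $(m,1,\dots,1)$, driven by $U_{1,v},\dots,U_{M,v}$. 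P\'olya-regularity then yields: each P-child edge is reinforced exactly $m-1$ times (tally $m$), every non-P-child edge has tally at most $2m$, and the parent tally is $p_M \ge m + M - n(2m-1) \ge M/2$ by \eqref{Mmn}.

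In the next phase, $M < j \le M'$, I would show by induction on $j$ that each firing reinforces the parent edge $0_v$. Assuming the induction up to $j-1$, one has $p_{j-1} \ge j/2$, while any child edge $vv'$ has tally at most $2m$ (its value at step $M$) plus the number of firings of $v'$ by time $T_{j-1,v}$, itself bounded by $qj/\eps^2$ via time-regularity of $v'$. Hence $c_{\max}/p_{j-1} \le (2m+\eps^{-2}qj)/(j/2)$, so the P\'olya-non-disturbing condition \eqref{eq:non-dist} forces
\[
U_{j,v} \;\le\; 1 - n\,(c_{\max}/p_{j-1})^\a \;\le\; \frac{p_{j-1}^\a}{\sum_{e \sim v} N_{j-1}(e)^\a},
\]
and by \eqref{choice} the parent is chosen. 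Since $T_{M',v} \ge \eps M'/\lambda_v \ge t_1(v')$ (again using \eqref{eq:qeM}), this induction carries past $t_1(v')$ for each P-child $v'$. Combined with the first phase, conclusion (i) of the proposition follows: by $t_0(v')$, $v$ has reinforced $vv'$ exactly $m-1$ times, and between $t_0(v')$ and $t_1(v')$ contributes no further reinforcement, so $v$ is a good parent for each P-child.

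For conclusion (ii), I would continue an induction for $j > M'$, this time propagating the bound $p_{j-1}^\a/\sum_e N_{j-1}(e)^\a \ge 1 - \delta_0/2$. Under this hypothesis, whenever $U_{i,v} < 1 - \delta_0/2$ in the range $M' < i < j$ the parent is reinforced. By P\'olya-well-behaved, at most $\delta_0 j$ of the $U_{i,v}$ in $\{M'+1,\dots,j\}$ exceed $1-\delta_0/2$, so $p_{j-1} \ge (1-\delta_0)j - O(1)$; meanwhile
\[
\sum_{i \ge 1} N_{j-1}(i_v) \;\le\; 2mn + \delta_0 j + n\eps^{-2} q j \;\le\; 2mn + 2\delta_0 j,
\]
the three summands being the initial plus Phase-1 contribution, child reinforcements by $v$ in $\{M'+1,\dots,j\}$ (controlled by well-behaved), and reinforcements by the children's own firings (bounded via \eqref{eq:qeM}). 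The elementary inequality $\sum_i x_i^\a \le (\sum_i x_i)^\a$, valid for $\a \ge 1$ and $x_i \ge 0$, then upgrades this to
\[
\frac{\sum_{i \ge 1} N_{j-1}(i_v)^\a}{N_{j-1}(0_v)^\a} \;\le\; \left(\frac{\sum_{i\ge 1} N_{j-1}(i_v)}{N_{j-1}(0_v)}\right)^{\a} \;\le\; \left(\frac{3\delta_0}{1-2\delta_0}\right)^\a \;<\; \frac{\delta_0}{2}
\]
by \eqref{eq:delta-def}, closing the induction. Consequently at least a $1-\delta_0 > 1/2$ fraction of $v$'s firings after step $M'$ reinforce $\bar vv$, and since $v$ is time-regular, $\liminf_{t\to\infty} t^{-1} N_t(\bar vv) \ge (1-\delta_0)\eps \lambda_v > \eps\lambda_v/2$.

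The main subtlety is interlocking the three parameter inequalities: \eqref{Mmn} pushes $p$ above $j/2$ in the middle phase; \eqref{eq:qeM} simultaneously ensures that no child fires before that phase begins and that the children's own firings remain subdominant afterwards; and \eqref{eq:delta-def} is precisely what closes the inductive ratio bound in the final phase. The crucial non-trivial step is the use of $\sum x_i^\a \le (\sum x_i)^\a$ for $\a>1$, which avoids a factor of $n$ that the cruder $\sum x_i^\a \le n\,x_{\max}^\a$ would introduce and would spoil \eqref{eq:delta-def}.
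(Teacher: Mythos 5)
Your proposal follows essentially the same three-phase argument as the paper's proof: the initial window $[0,t_0(v))$ setting up the $(m,1,\dots,1)$ configuration, the P\'olya-urn window up to the $M$-th firing, the induction showing that firings $M+1,\dots,M'$ all go to $0_v$ via \eqref{eq:non-dist}, the deduction that $v$ is a good parent for its P-children because $T_{M,v}<t_0(v')$ and $T_{M',v}>t_1(v')$, and finally the induction for $j>M'$ using \eqref{eq:well} and the ratio bound closed by \eqref{eq:delta-def} via $\sum_i x_i^\a \le (\sum_i x_i)^\a$. The paper uses exactly this inequality at the same point, so your highlighted observation is accurate but implicit there; the arguments and parameter bookkeeping otherwise coincide.
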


\begin{proof}
Let us describe the evolution of tallies on the edges adjacent to $v$. 
\begin{itemize}
\item \textbf{Before $t_0(v)$.} Note that up to time $t_0(v)$ there were no firings at the vertex $v$ due to its time-regularity, thus, the only reinforcements for the $\bar vv$ edge are those coming from the parent~$\bar v$. Also, the children of $v$ are time-regular (as $v$ is time-good). Hence they also do not fire by the time $t_0(v)$, and moreover, will not fire even until $q^{-1} t_0(v) > t_1(v)$ (the inequality is due to~\eqref{eq:qeM}). Since also $\bar v$ is a nurturing parent for $v$, it reinforces the edge $\bar vv$ exactly $m-1$ times by the time $t_0(v)$. Hence the tallies of edges adjacent to $v$ at time $t_0(v)$ are $(m,1,\dots,1)$. 
\begin{figure}[!h!]
{\center
	\begin{tikzpicture}


	\draw[->] (.5,-3.5)--(14.5,-3.5);

	\fill (3,-3.5) circle (2pt);
	\coordinate[label={[align=center]below:{$t_0(v)$} }] (A) at (3.0, -3.50);

	\fill (7.4,-3.5) circle (2pt);
	\coordinate[label={[align=center]below:{$T_{M, v}$} }] (A) at (6.9, -3.50);

	\fill (9.5,-3.5) circle (2pt);
	\coordinate[label={[align=center]below:{$t_1(v)$} }] (A) at (9.5, -3.50);

	\fill (11.4,-3.5) circle (2pt);
	\coordinate[label={[align=center]below:{$t_0(v') = \tfrac{t_0(v)}q$} }] (A) at (11.4, -3.50);

	\fill (13.5,-3.5) circle (2pt);
	\coordinate[label={[align=center]below:{$T_{M', v}$} }] (A) at (13.5, -3.50);

	\begin{scope}[shift={(0,0), scale=0.69}]
\node[circle,fill = black,scale = 0.3] (A) at (0.5,0) {};
\node[circle,fill = black,scale = 0.3] (v) at (2,0) {};
\node[circle,fill = black,scale = 0.3] (B) at (4,2) {};
\node[circle,fill = black,scale = 0.3] (C) at (4,0.5) {};
\node[circle,fill = black,scale = 0.3] (E) at (4,-0.5) {};
\node[circle,fill = black,scale = 0.3] (F) at (4,-2) {};

		\draw (v) circle (.15cm);
		\draw (B) circle (.15cm);
		\draw (C) circle (.15cm);
		\draw (E) circle (.15cm);
		\draw (F) circle (.15cm);

\node at (4.0,2.4) {${v'}$};

\node at (4.4,2.0) {$\dots$};
\node at (4.4,1.9) {$\ddots$};
\node at (4.4,2.3) {$\iddots$};

\node at (4.4,0.5) {$\dots$};
\node at (4.4,0.4) {$\ddots$};
\node at (4.4,0.8) {$\iddots$};

\node at (4.4,0.5) {$\dots$};
\node at (4.4,0.4) {$\ddots$};
\node at (4.4,0.8) {$\iddots$};

\node at (4.4,-0.5) {$\dots$};
\node at (4.4,-0.6) {$\ddots$};
\node at (4.4,-0.2) {$\iddots$};

\node at (4.4,-2) {$\dots$};
\node at (4.4,-2.1) {$\ddots$};
\node at (4.4,-1.7) {$\iddots$};


\draw[very thick] (.5,0)--(2,0);
\draw (2,0)--(4,2);
\node at (3,0.7) {$\vdots$};
\node at (3,-0.5) {$\vdots$};
\draw (2,0)--(4,0.5);
\draw (2,0)--(4,-0.5);
\draw (2,0)--(4,-2);
\node at (1.4,0.15) {$m$};
\node at (2,-0.35) {$v$};
\node at (.5,-0.35) {$\bar v$};
\node at (3.3,1.5) {$1$};
\node at (3.3,0.5) {$1$};
\node at (3.3,-0.15) {$1$};
\node at (3.3,-1.1) {$1$};

\draw[->] (3,-1.5)--(3,-3.3);
\end{scope}
	\begin{scope}[shift={(4.7,0), scale=0.69}]
\node[circle,fill = black,scale = 0.3] (A) at (0.5,0) {};
\node[circle,fill = black,scale = 0.3] (v) at (2,0) {};
\node[circle,fill = black,scale = 0.3] (B) at (4,2) {};
\node[circle,fill = black,scale = 0.3] (C) at (4,0.5) {};
\node[circle,fill = black,scale = 0.3] (E) at (4,-0.5) {};
\node[circle,fill = black,scale = 0.3] (F) at (4,-2) {};

		\draw (v) circle (.15cm);
		\draw (B) circle (.15cm);
		\draw (C) circle (.15cm);
		\draw (E) circle (.15cm);
		\draw (F) circle (.15cm);

\node at (4.0,2.4) {${v'}$};
\node at (4.4,0.5) {$\dots$};
\node at (4.4,0.4) {$\ddots$};
\node at (4.4,0.8) {$\iddots$};

\node at (4.4,-0.5) {$\dots$};
\node at (4.4,-0.6) {$\ddots$};
\node at (4.4,-0.2) {$\iddots$};

\node at (4.4,-2) {$\dots$};
\node at (4.4,-2.1) {$\ddots$};
\node at (4.4,-1.7) {$\iddots$};

\draw[line width = 3pt] (.5,0)--(2,0);
\draw[very thick] (2,0)--(4,2);
\node at (3,0.7) {$\vdots$};
\node at (3,-0.5) {$\vdots$};
\draw[very thick] (2,0)--(4,0.5);
\draw (2,0)--(4,-0.5);
\draw (2,0)--(4,-2);
		\node at (1.1,0.3) {\scriptsize{$\ge M - 2mn$}};
\node at (2,-0.35) {$v$};
\node at (0.5,-0.35) {$\bar v$};
\node at (3.3,1.6) {$m$};
\node at (3.3,0.5) {$m$};
\node at (3.3,-0.15) {$?$};
\node at (3.3,-1.1) {$?$};
\draw[->] (2.7,-1.5)--(2.7,-3.3);
	\end{scope}
\end{tikzpicture}

}
\caption{Neighbourhood of the vertex $v$ after $M$ of its firings; the encircled vertices are time-regular.}
\end{figure}
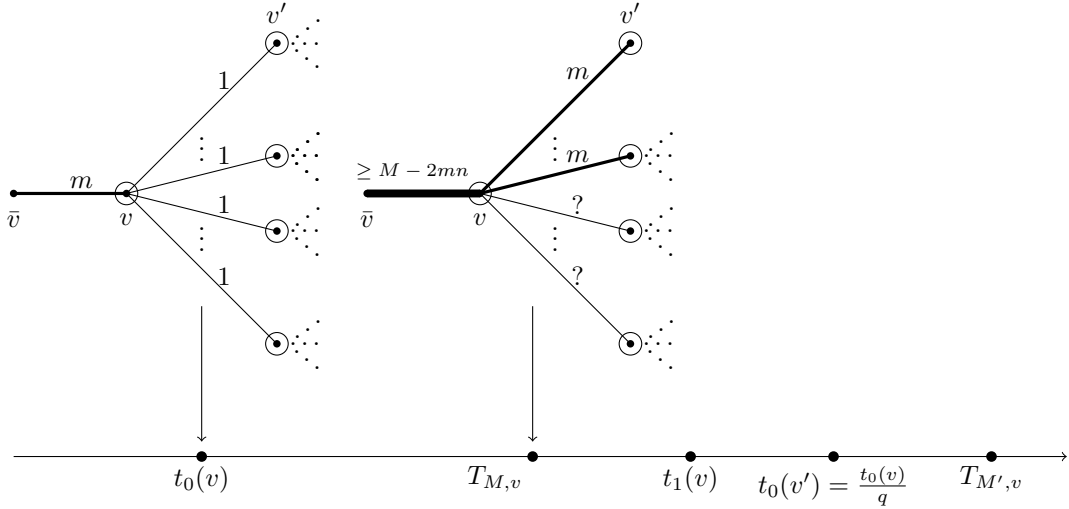
\item \textbf{Between $t_0(v)$ and $T_{M,v}$.} 
Since $v$ is time-regular, $T_{M,v}<t_1(v)$: the vertex $v$ will fire at least $M=\eps \lambda_v t_1(v)$ times by time $t_1(v)$. 

The children of $v$ do not fire before $t_1(v)$ due to their time-regularity. Also, $\bar v$ is a non-disturbing parent for $v$, hence from time $t_0(v)$ until $t_1(v)$ it does not reinforce $\bar v v$. Hence, the only reinforcements that can come to the edges adjacent to~$v$ during this period are those coming from the firing at~$v$. Thus the reinforcement of the edges adjacent to $v$ during this time {are determined by the number of firings at $v$ and the P\'olya urn choices according to the $U_{j,v}$ as in \eqref{choice} with $(m,1,\dots, 1)$ as the initial state.}

Since $v$ is P\'olya-regular, after its first $M$ firings the conclusions of Corollary~\ref{cor:Polya} will hold. In particular, $v$ has at least $5$ $P$-children, i.e.~edge-offspring of $v$ which have been reinforced exactly $m-1$ times after these $M$ firings at~$v$. Also, the tally of the edge $0_v=\bar v v$ at this moment is at least $m+(M-n(2m-1))=M-2mn+m+n$, as at most $(2m-1)n$ firings of the vertex $v$ went to other edges.

\item \textbf{From $T_{M,v}$ to $T_{M',v}$.}
	\emph{All the firings of $v$ from $(M+1)$-st to $M'$-th reinforce the edge $\bar v v$.} {We will prove this by induction on $j\in M+1,\dots, M'$, verifying both the base case $j=M+1$ and the induction step via the same argument. For $j=M+1,\dots, M'$, the tally of the edge $0_v=\bar vv$ just before the $j$-th firing at the vertex $v$ is at least $j-2mn>\frac j2$: for the base case $j=M+1$ this follows from the fact above that the tally of this edge is at least 
	\[M+(m+n)-2mn\ge M+1-2mn>(M+1)/2
	\] 
	after $M$ firings at $v$ (the latter inequality following from \eqref{Mmn}), and then for all the other $j$ this follows trivially from the induction hypothesis.}

On the other hand, the tally of every other edge $v v'$ (joining $v$ to one of its children $v'$) at this moment does not exceed 
\[
N_{T_{j,v}-}(vv') \le {2m}+\eps^{-2} q j;
\] 
the first summand here is an upper bound on the tally at $T_{M,v}$, and the second is an upper bound for the number of firings at~$v'$ at this moment, which holds due to time-regularity of both $v$ and $v'$: 
\[
T_{\lceil \eps^{-2} q j \rceil,v'} > \frac{\eps}{\lambda_{v'}} \eps^{-2} q j = \frac1{q \lambda_v} \eps^{-1} q j > T_{j,v}.
\]
Finally, the P\'olya-non-disturbing condition~\eqref{eq:non-dist} implies that with these tallies, the $j$-th firing will also reinforce the edge $0_v=\bar v v$. Indeed, using \eqref{eq:non-dist} and $1-x<1/(1+x)$ for all $x\ge 0$ we have
\[
U_{j,v} \le 1 - n\left(\frac{2m + \eps^{-2}qj}{j/2}\right)^\a < \frac{(j/2)^\a}{(j/2)^\a + n \cdot (2m+\eps^{-2}qj)^\a} < \dfrac{N_{T_{j,v}-}(0_v)^\a}{N_{T_{j,v}-}(0_v)^\a + \sum_{i' > 0}N_{T_{j,v}-}(i'_v)^\a},
\]
and hence (recall~\eqref{Uparent}) the edge $0_v = \bar vv$ is reinforced by this firing.

The above description already suffices to conclude that the first claim of the proposition holds. Indeed, for any child $v'$ of $v$, we have by \eqref{eq:qeM} that $t_0(v')=q^{-1}t_0(v)>t_1(v)>T_{M,v}$. Therefore since $v$ is time-good and time-regular it has fired more than $M$ times by time $t_0(v')$.  Thus the edge towards any $P$-child $v'$ of $v$ at this moment has a tally equal to~$m$, so $v$ is a nurturing parent for $v'$.  Meanwhile, $T_{M',v}>q^{-1}t_1(v)=t_1(v')$, and thus $v$ does not reinforce the edge $vv'$ between the times $t_0(v')$ and $t_1(v')$, and hence $v$ is a non-disturbing parent for $v'$.

\item \textbf{After $T_{M',v}$}. Let us now proceed to the second claim of the proposition, to the asymptotic reinforcement of the edge $0_v=\bar v v$. We will prove by induction on $j=M'+1,M'+2,\dots$ the following statement: if $U_{j,v}<1-\frac{\delta_0}2$, then the $j$-th firing of the vertex $v$ reinforces~$0_v$. 

	To do so, let us estimate the tallies of the edges adjacent to $v$ up to the moment of the $(j+1)$-st firing. {In the case $j+1=M'+1$, the tally of the edge $0_v$ satisfies
\[
N_{T_{j,v}-}(0_v) \ge (M'-M)+(M-2mn)= M'-2mn,
\]
which is the lower bound for $N_{T_{M',v}}(0_v)$ obtained from the bounds in the previous time periods.  
For all larger $j$, from the induction hypothesis, the tally of the edge $0_v$ satisfies
\[
N_{T_{j,v}-}(0_v) \ge (M'-2mn) + \#\Big\{i\in \{M' + 1,\dots,j \}:\, U_{i,v} < 1 - \frac{\delta_0}2\Big\}.
\]
}
{In each case, due} to~\eqref{eq:well} this sum is lower-bounded by 
\begin{equation}\label{eq:0v}
(M'-2mn)+(j-M'-\delta_0 j) = (1-\delta_0) j-2mn>(1-2\delta_0)j;
\end{equation}
here we have used $\delta_0 j>\delta_0 M'>2mn$, that follows from $M'\ge M/(\eps^{2}q)$, the inequality $M>2mn$ and~\eqref{eq:qeM}.
On the other hand, the sum of tallies of the edges $vv'$, where $v'$ are children of $v$, does not exceed 
\begin{equation}\label{eq:sum-delta}
2mn+n\cdot \eps^{-2}q j+\delta_0 j = (\delta_0+n\eps^{-2}q) j +2mn < 3 \delta_0 j.
\end{equation}
Indeed, the first summand on the left hand side of~\eqref{eq:sum-delta} bounds from above the sum of tallies at the moment $t_0(v')$, the second corresponds to the number of possible reinforcements by $v'$, and the last one to those by $v$. {The inequality in \eqref{eq:sum-delta} arises similarly from \eqref{eq:qeM} and \eqref{Mmn} as above.}

Hence, we have 
\[
\frac{\sum_{i>0} N_{T_{j+1,v}-}(i_v)}{N_{T_{j+1,v}-}(0_v)} \le \frac{n(2m+\eps^{-2}q j)+\delta_0 j}{(1-\delta_0) j-2mn} < \frac{3\delta_0 j}{(1-2 \delta_0) j} =\frac{3\delta_0}{1-2 \delta_0}.
\]
Since $\a > 1$, we obtain
\[
\frac{\sum_{i>0} N_{T_{j+1,v}-}(i_v)^\a }{N_{T_{j+1,v}-}(0_v)^\a } \le \left(\frac{\sum_{i>0} N_{T_{j+1,v}-}(i_v)}{N_{T_{j+1,v}-}(0_v)} \right)^\a  < \left(\frac{3\delta_0}{1-2 \delta_0}\right)^\a  < \frac{\delta_0}2,
\]
where the last inequality is due to~\eqref{eq:delta-def}. Thus, if $U_{j,v}<1-\frac{\delta_0}2$, then due to~\eqref{Uparent} the $(j+1)$-st firing also goes to $0_v$, thus proving { both the base case and} the induction step.

Hence, for each $j\ge M'$, \eqref{eq:0v} is a lower bound for the tally of the edge $0_v$ at time $T_{j,v}$. Due to the time-regularity of the vertex $v$ and the fact that $\delta_0<1/2$ we obtain
\[
\liminf_{t\to\infty} \frac1t N_t(\bar vv) \ge \liminf_{j \to \infty}\dfrac{N_{T_{j,v}}(0_v)}{T_{j+1,v}}\ge \lim_{j \to \infty}\dfrac{(1-\delta_0)j-2mn}{(j+1)/\eps\lambda_v}
= (1-\delta_0) \eps \lambda_v > \frac{\eps}2\lambda_v.
\]
This proves the second claim of the proposition, and thus concludes the proof.
\end{itemize}
\end{proof}

\begin{proof}[Proof of Proposition \ref{p:cryst}]
Let us show that every node $v'$ of the crystallisation tree $\Cr_v$ has the following properties
	\begin{enumerate}
		\item\label{i:family} $v'$ has a good parent, is time-regular and {tree-good};
		\item\label{i:edges} $\mE_+$ contains the edge $0_{v'}$.
	\end{enumerate}
Let us start with the first claim. We proceed by induction on the distance of a vertex $v'$ in the crystallisation tree to $v$. The base $v'=v$ is given by the assumptions of the proposition. Now, assume that this statement holds for some vertex $v'\in \Cr_v$. Then, Proposition~\ref{p:induction} is applicable to $v'$, and thus $v'$ is a good parent for its $P$-children. Thus, any child $v''$ of $v'$ that belongs to $\Cr_v$ has a good parent, is time-regular (because $v'$ is time-good) and is tree-good (otherwise it wouldn't be appended to the crystallisation tree). This proves the induction step, and hence claim~\ref{i:family} for all the vertices in~$\Cr_v$.

Now, let $v'\in \Cr_v$. Again, Proposition~\ref{p:induction} is applicable to $v'$, and from its second conclusion we get that the edge $0_{v'}$ is reinforced with a lower-bounded intensity: 
\[
\liminf_{t\to\infty} \frac1tN_t(0_{v'}) \ge \frac{\eps\lambda_{v'}}2 > 0.
\]
Hence, $0_{v'}\in \mE_+$, thus proving claim~\ref{i:edges}.
\end{proof}

\subsection{Proof of Proposition \ref{p:GW}}\label{s:proof-GW}
In the construction of the crystallisation tree started at a tree-good vertex $v$, in order to determine the number of offspring of $v$ we need only look at $\bs{U}_v$ (to determine the number of $P$-children) and $\bs{U}_{v1},\dots, \bs{U}_{vn}$ and the times $\bs{T}_{vij}$ where $i,j \in \{1,\dots, n\}$ (to determine which children of $v$ are tree-good). If a $P$-child $vi_v$ is indeed tree-good for some $i_v\in \{1,\dots, n\}$, then to determine its number of offspring we need only look at $\bs{U}_{vi_v}$ again (to determine the number of $P$-children) and $\bs{U}_{vi_v1},\dots, \bs{U}_{vi_vn}$ and the times $\bs{T}_{vi_vjk}$ where $j,k \in \{1,\dots, n\}$ (to determine which children of $v$ are tree-good). All of these random variables are independent, except that we may look at each $\bs{U}_{v'}$ twice (once to determine if $v'$ is tree-good, and once to determine its $P$-children).

Thus given that $v$ is a tree-good vertex, the crystallisation tree $\Cr_v$ is a Galton Watson tree with offspring distribution given by the number of tree-good $P$-children of $v$. 

To show that the crystallisation tree can be made supercritical, we need to show that the average number of offspring can be made greater than~$1$.

The main step that we have to make now is to show that the good events appearing in Definition~\ref{d:good} occur with high probability. Recall that $V^- := V \setminus \{x_0\}$ denotes the family of non-root vertices of $\TT_n$.
\begin{lemma}[High probability of good events]
	\label{l:good}
	Let $\a > 1$, $n \ge 2$ and $v \in V^-$. Then,
	$$\liminf_{M \to \infty}\liminf_{\e \to 0}\liminf_{q \to 0} \P(E_i) =1$$
	for each of the following events $E$:
	\begin{itemize}
		\item[(I)] $E_1 = \{\text{$v$ is time-regular}\}$
		\item[(II)] $E_2 = \{\text{$v$ is time-good}\}$
		\item[(III)] $E_3 = \{\text{$v$ is P\'olya non-disturbing}\}$
		\item[(IV)] $E_4 = \{\text{$v$ is P\'olya-well behaved}\}$.
	\end{itemize}
\end{lemma}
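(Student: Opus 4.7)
The four events split into two groups by the source of randomness: (I)--(II) depend only on the Poisson clocks $\bs{T}_v,\bs{T}_{v'}$, while (III)--(IV) depend only on the i.i.d.\ Uniform variables $\bs{U}_v$. Each is a concentration statement that becomes trivial once enough slack is opened up in the defining inequalities, so the overall strategy is a direct union bound plus tail bound in every case; no new probabilistic input is needed.

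\textbf{Events (I)--(II).} The crucial rescaling is that $\lambda_v T_{k,v} \sim \mathrm{Gamma}(k,1)$, so $\P(E_1)$ does not depend on $q$ at all. For $k \ge 2$, Chernoff concentration of $\mathrm{Gamma}(k,1)$ on $[\eps k, \eps^{-1}k]$ gives a bound of the form $2e^{-c(\eps)k}$ with $c(\eps) > 0$ bounded below for small $\eps$; for $k=1$ a direct computation yields the failure probability $\le \eps + e^{-1/\eps}$. A union bound over $k$ then gives $\P(E_1^c) \to 0$ as $\eps \to 0$, proving (I). Event (II) follows by applying (I) to each of the (at most) $n+1$ independent children of $v$, whose firing rates $q^{d(v)+1}$ rescale out in exactly the same way, and then taking a union bound over the children.

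\textbf{Event (III).} The starting point is the union bound over the i.i.d.\ $U_{j,v}$,
\[
\P(E_3^c) \;\le\; \sum_{j=M+1}^{M'} n\Bigl(\tfrac{2(2m+\eps^{-2}qj)}{j}\Bigr)^\a,
\]
which I would split via $(a+b)^\a \le 2^{\a-1}(a^\a+b^\a)$ into a ``small-$j$'' piece of order $n(2m)^\a M^{1-\a}$ (independent of $\eps,q$, summable only because $\a>1$) and a ``large-$j$'' piece of order $n\eps^{-2\a-2}q^{\a-1}M$, using $M' \le \eps^{-2}q^{-1}M + 1$. The second piece vanishes as $q \to 0$ only because $\a>1$. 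Taking the $\liminf$'s in the stipulated order first kills the second piece via $q^{\a-1}\to 0$, then leaves the first piece to be killed by $M \to \infty$. This is the main technical step: one must exploit $\a>1$ \emph{twice}, once to make the $j^{-\a}$-tail summable and once to extract a positive power of $q$, and this double use is precisely what dictates the nested ordering of $\liminf$'s in the conclusion.

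\textbf{Event (IV).} The indicators $X_i := \1_{U_{i,v} \ge 1-\delta_0/2}$ for $i > M'$ are i.i.d.\ Bernoulli$(\delta_0/2)$, so $E_4^c$ is the event that $\sum_{i=M'+1}^j X_i > \delta_0 j$ for some $j > M'$. Writing $k = j - M'$, for $k \le \delta_0 M'/(1-\delta_0)$ this event is automatically empty (since $X_i \le 1$ gives $\sum X_i \le k \le \delta_0 j$); for larger $k$, Hoeffding's inequality gives $\exp\bigl(-\delta_0^2(2M'+k)^2/(2k)\bigr)$. Minimising $(2M'+k)^2/(2k)$ over $k>0$ (attained at $k=2M'$ with value $4M'$) and summing separate tails for $k \le M'$ and $k > M'$ yields $\P(E_4^c) \le C\bigl(M'e^{-2\delta_0^2 M'} + e^{-\delta_0^2 M'/2}\bigr) \to 0$ as $q \to 0$, since $M' \ge \eps^{-2}q^{-1}M \to \infty$ for fixed $\eps$ and $M$. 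This is routine once the ``small-$k$'' deterministic regime has been peeled off.
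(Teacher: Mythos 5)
Your proposal is correct, and for (II) and (III) it is essentially identical to the paper's proof (same union bound, same splitting of $(a+b)^\a$, same identification of which parameter kills which piece, including the observation that $\a>1$ is used twice in (III)). For (I) and (IV) you take a slightly more quantitative route: where the paper invokes the strong law of large numbers to dispose of all sufficiently large indices at once (choosing a threshold $k_1$, resp.\ $j_1$, whose distributional role is uniform in $q,M$) and then handles the finitely many remaining indices by shrinking $\eps$ (resp.\ by the same deterministic observation $k\le \delta_0 M'/(1-\delta_0)$ that you use), you run explicit Chernoff/Hoeffding tail bounds and sum over all indices. Both work; yours yields rates, the paper's is softer but shorter. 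One small imprecision in your (I): asserting that the Chernoff rate $c(\eps)$ is merely ``bounded below'' for small $\eps$ is not by itself enough to make $\sum_{k\ge 2}2e^{-c(\eps)k}$ tend to $0$ as $\eps\to 0$ --- it only makes the sum bounded. What saves you is that in fact $c(\eps)\to\infty$ as $\eps\to 0$ (e.g.\ $\P(\Gamma_k\le \eps k)\le (e\eps)^k$), or alternatively dominated convergence since each term vanishes and is dominated by a summable sequence; either fix is one line.
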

Note that since the distribution of the scaled variables $\{\lambda_v T_{k, v}\}_{k \ge 1}$ does not depend on $v$, neither does the probability of the event $E_i$.

\begin{proof}
	We show that 
	$$\liminf_{M \to \infty}\liminf_{\e \to 0}\liminf_{q \to 0} \P(E_i) \ge 1 - \delta$$
	for every fixed $\delta > 0$.

\medskip

\noindent (I) 	Note that the probability of this event does not depend on $q$ or $M$ at all. 
	By the strong law of large numbers, there exists $k_1 \ge 1$ such that 
	$$\P\big(\lambda_v T_{k, v} \in (k/2, 2k) \text{ for all $k \ge k_1$} \big) \ge 1 - \delta/2.$$
	Fixing such a $k_1$, we then choose {$\eps\in (0,1/2)$} sufficiently small such that 
	$$\P\big(\lambda_v T_{k, v} \in (k\eps, k\eps^{-1}) \text{ for all $k \le k_1$} \big) \ge 1 - \delta/2.$$
	
	\medskip
	
\noindent (II) Follows immediately from part (I) since the number of children of any particle is fixed.	
	
	\medskip

\noindent 	(III) Note that $(a+b)^\a\le 2^\a (a^\a+b^\a)$ for any $a,b,\a>0$. Applying this to the condition in~\eqref{eq:non-dist} we obtain for every $j$
	\begin{align*}
		\left(\frac{2m + \eps^{-2}q j}{j/2} \right)^\a
		\le {8^\a m^\a}{j^{-\a}} + 4^\a\e^{-2\a}q^\a.
	\end{align*}
	Hence, by the union bound, 
	\[
	 1 - \P(E_3) \le 
	 	\sum_{j=M+1}^{M'} n \left(\frac{2m + \eps^{-2}q j}{j/2} \right)^\a \le
	 	8^\a m^\a n \sum_{j > M}j^{-\a} + n 4^\a\e^{-2\a} \cdot \underbrace{\lceil \eps^{-2} q^{-1} M\rceil}_{M'} \cdot q^\a.
	 \]
	Since $\a > 1$, for all $M$ sufficiently large the first term on the right hand side is smaller than $\delta/2$. On the other hand, for any $M,\vep$, the second term on the right hand side becomes less than $\delta/2$ once $q$ is sufficiently small (it scales as $q^{\a-1}$).
	
	\medskip
	
\noindent (IV) 
Recall that $\delta_0$ is fixed (and does not depend on $M$, $\eps$, $q$). By the strong law of large numbers, there exists $j_1 \ge 1$ such that 
\begin{equation}
\label{eq:j1}
	\P\Big( \sup_{j- M' >  j_1} \frac 1j\#\Big\{i\in \{M' + 1,\dots, j \}:\, U_{i,v} \ge 1 - \frac{\delta_0}2\Big\} \le \delta_0\Big)\ge 1 - \delta
\end{equation}
	holds uniformly over $M$, $\eps$ and $q$. Fix such a $j_1$.
	
	 Now, choose $q$ sufficiently small such that $j_1/M'\le \delta_0$. Then, for $j\le M'+j_1$ the inequality in~\eqref{eq:well} holds automatically, as 
	$$	 \sup_{ 1 \le j - M' \le  j_1} \frac 1j\#\Big\{i\in \{M' + 1,\dots, j \}:\, U_{i,v} \ge 1 - \frac{\delta_0}2\Big\} \le \frac {j_1}{M'}\le \delta_0;$$
	joining this with~\eqref{eq:j1} (that handles $j> M'+j_1$), we conclude the proof.
\end{proof}

Recall that $\mc T_v$ denotes the event that $v$ is tree-good.

{
\begin{corollary}
\label{cor:P2}
For $\a, n,m$ as in Theorem \ref{t:Polya},
$$\liminf_{M \to \infty}\liminf_{\e \to 0}\liminf_{q \to 0} \P(\mc T_v)> 4/5.$$
In particular, there exist sufficiently small $q$, sufficiently small $\eps$ and sufficiently large $M$, so that the crystallisation tree is supercritical. 
\end{corollary}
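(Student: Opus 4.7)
The plan is to leverage the independence structure built into Definition~\ref{d:good} together with Remark~\ref{r:independent}. Since $\mc T_v$ is the intersection of four events (P\'olya-regular, P\'olya-non-disturbing, P\'olya-well behaved, and time-good) that are independent for a fixed $v$ by Remark~\ref{r:independent}, the probability factorises as
\[
\P(\mc T_v)= \P(\text{P\'olya-regular})\cdot\P(\text{P\'olya-non-disturbing})\cdot\P(\text{P\'olya-well behaved})\cdot\P(\text{time-good}).
\]
The first factor is strictly greater than $4/5$, uniformly in $(\eps,q)$, by Corollary~\ref{cor:Polya}, provided $M$ is chosen at least the corresponding $M_0$: this event depends only on $\bs{U}_v$ and on the deterministic choice of $M$, and so the bound is insensitive to the subsequent limits in $\eps$ and $q$. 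Meanwhile, parts (II)--(IV) of Lemma~\ref{l:good} yield that each of the remaining three factors has iterated $\liminf$ equal to~$1$ as $M\to\infty$, then $\eps\to 0$, then $q\to 0$. The iterated $\liminf$ of the product is therefore bounded below by $(4/5)\cdot 1\cdot 1\cdot 1$, with the strict inequality inherited from Corollary~\ref{cor:Polya}, proving the first display.

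For the ``in particular'' assertion I would use the Galton-Watson description of $\Cr_v$ recorded in the paragraph preceding Proposition~\ref{p:GW}: conditionally on $\mc T_v$, the number of offspring of $v$ in $\Cr_v$ equals the number of tree-good $P$-children of~$v$. The goal is to make the mean of this offspring count strictly greater than~$1$. First I would fix $M$, $\eps$, $q$ (in that order) so that $\P(\mc T_v)>4/5$, which is possible by the first part. On the event $\mc T_v$, the P\'olya-regularity of $v$ guarantees $K\ge 5$ $P$-children $c_1,\dots,c_K$ whose identities are $\sigma(\bs{U}_v)$-measurable. For each $i$, the event $\mc T_{c_i}$ is measurable with respect to $\bs{U}_{c_i}$ and to the Poisson clocks indexed by $B(c_i)\setminus\{v\}$, hence is independent both of $\mc T_v$ and of $\mc T_{c_j}$ for $j\ne i$. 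Consequently, conditionally on $\mc T_v$ and on the identities of the $P$-children, the count $\sum_{i=1}^K \one_{\mc T_{c_i}}$ stochastically dominates a $\mathrm{Bin}(5,\P(\mc T_v))$ random variable, whose mean is
\[
5\,\P(\mc T_v) > 5\cdot\tfrac{4}{5}=4>1,
\]
so the offspring mean is strictly above~$1$ and $\Cr_v$ is supercritical.

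I do not anticipate a serious obstacle: the argument is essentially careful bookkeeping of ``disjoint randomness''. The one point requiring attention is that $\bs{U}_v$ is read twice during the construction rooted at $v$ (once to verify P\'olya-regularity of $v$, once to identify its $P$-children), but the tree-goodness of each $P$-child draws on a fresh, disjoint portion of the probability space, so no spurious correlation is introduced. Beyond Lemma~\ref{l:good} and Remark~\ref{r:independent}, the only quantitative input is the pairing of the ``$>4/5$'' of Theorem~\ref{t:Polya} with the ``$\ge 5$'' count of $P$-children, which produces a mean offspring at least $5\cdot(4/5)=4$, comfortably above the critical value~$1$; this also explains the otherwise opaque appearance of those specific constants in the earlier statements.
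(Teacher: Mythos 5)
Your proposal is correct and follows essentially the same route as the paper: the first claim is obtained from Corollary~\ref{cor:Polya} (for P\'olya-regularity) combined with Lemma~\ref{l:good} and the independence in Remark~\ref{r:independent}, and supercriticality follows because the offspring count dominates a $\mathrm{Bin}(5,4/5)$ variable with mean $4>1$. Your write-up merely makes explicit the factorisation of $\P(\mc T_v)$ and the disjointness of the randomness used for the $P$-children, which the paper leaves implicit.
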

\begin{proof}
The first claim follows from the definition of tree good, Lemma \ref{l:good} and Corollary \ref{cor:Polya} (the latter shows that the probability that a vertex $u$ is P\'olya-regular is at least $4/5$). The number of offspring of such a vertex dominates a Binomial$(5,4/5)$ random variable (since 5 is the minimal number of $P$-children of a tree-good vertex, and each child has probability at least $4/5$ of being tree-good. Thus, the expected number of children of a vertex $u$ in the crystallisation tree is at least $4=5 \cdot 4/5$.
\end{proof}
}

\section{Single P\'olya urn: proof of Theorem~\ref{t:Polya}}\label{s:Polya}

Let $\a>1$ be fixed. As stated in the introduction, for the proof of Theorem~\ref{t:Polya} we will use Rubin's construction~\cite{davis}. Namely, we introduce families $(\xi_j,\eta_{j,k})_{j \in \N, k = 1,\dots ,n}$ of i.i.d.~exponential random variables with mean 1, and consider partial sums $S^{(r)}$ and $Z_k^{(r)}$ (see~\eqref{partials})
of the series 
$$
S= \sum_{j = m}^{\infty}\frac{\xi_j}{j^\a} \quad \text{ and } \quad Z_k = \sum_{j = 1}^{\infty}\frac{\eta_{j,k}}{j^\a}, \quad k=1,\dots, n
$$
respectively. As noted in the earlier sketch proof, these sums can be coupled with the P\'olya urn: one puts balls of the corresponding colours at these ``times''.

The conclusion of Theorem~\ref{t:Polya} (for a particular realisation) will then be satisfied if
\begin{itemize}
\item There exist at least $5$ (random) colours $k=k_1,\dots,k_5$ such that $Z_k^{(m-1)}<S<Z_k^{(m)}$.
\item $Z_k^{(2m)}>S$ for all~$k$.
\end{itemize}

Let us show that we can choose the number $n$ of colours so that both of the above conditions are fulfilled with probability greater than~$\frac45$. We will first deduce it from a chain of auxiliary statements, postponing their proofs, and will subsequently prove these auxiliary statements.

\subsection{Deducing the theorem from auxiliary statements}

The first of these statements is devoted to the description of the random sum $S=\sum_{j = m}^{\infty}\frac{\xi_j}{j^\a}$. As it has many ($\sim m$) terms of roughly the same magnitude (in the spirit of the Law of Large Numbers and Central Limit Theorem) its sum is likely to be close to its expectation. This expectation is close to $\frac1{(\a-1)m^{\a-1}}$ due to a comparison with the integral $\int_m^{\infty}x^{-\a} {dx}$. More precisely, we have the following.

\begin{proposition}\label{p:S}
There exists a constant $C_1 > 0$ such that for all sufficiently large $m$ one has 
		\begin{equation}\label{eq:S-Cheb}
		\P\left(\Big|S-\frac{1/(\a-1)}{m^{\a-1}}\Big| < \frac{C_1}{m^{\a-\frac12}}\right) \ge 1-\frac1{100}.
		\end{equation}
\end{proposition}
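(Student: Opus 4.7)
The plan is a textbook Chebyshev argument, with the expected value computed by integral comparison.

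First, I would compute $\E[S]$ and $\Var(S)$ directly. Since the $\xi_j$ are i.i.d.\ with $\E[\xi_j]=\Var(\xi_j)=1$, we have
\[
\E[S]=\sum_{j=m}^\infty \frac{1}{j^\a},\qquad \Var(S)=\sum_{j=m}^\infty \frac{1}{j^{2\a}}.
\]
Standard integral comparison (monotonicity of $x\mapsto x^{-\a}$ and $x\mapsto x^{-2\a}$) gives
\[
\Big|\E[S]-\frac{1}{(\a-1)m^{\a-1}}\Big|\le \frac{C}{m^\a}, \qquad \Var(S)\le \frac{C'}{m^{2\a-1}},
\]
for constants $C,C'$ depending only on $\a$. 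In particular, the mean deviation term $C/m^\a$ is of smaller order than the target scale $1/m^{\a-\frac12}$ for large $m$.

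Next, I would apply Chebyshev's inequality: for any $t>0$,
\[
\P\bigl(|S-\E[S]|\ge t\bigr)\le \frac{\Var(S)}{t^2}\le \frac{C'}{m^{2\a-1}\,t^2}.
\]
Setting $t=C_1/(2m^{\a-\frac12})$ gives a bound of $4C'/C_1^2$, which is at most $1/200$ once $C_1$ is taken large enough (depending only on $\a$). Finally, using the triangle inequality
\[
\Big|S-\frac{1/(\a-1)}{m^{\a-1}}\Big|\le |S-\E[S]|+\Big|\E[S]-\frac{1/(\a-1)}{m^{\a-1}}\Big|\le |S-\E[S]|+\frac{C}{m^\a},
\]
and noting that $C/m^\a<C_1/(2m^{\a-\frac12})$ for all sufficiently large $m$, we conclude
\[
\P\left(\Big|S-\tfrac{1/(\a-1)}{m^{\a-1}}\Big|\ge \tfrac{C_1}{m^{\a-\frac12}}\right)\le \P\bigl(|S-\E[S]|\ge \tfrac{C_1}{2m^{\a-\frac12}}\bigr)\le \tfrac{1}{200}<\tfrac1{100},
\]
which is the desired bound.

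There is no real obstacle here; the only care needed is in the book-keeping of constants so that the two contributions (fluctuation of $S$ around its mean, and the shift of the mean from its integral approximation) are both dominated by $C_1/m^{\a-\frac12}$. The condition $\a>1$ is used to ensure that both series converge and that both error terms are of strictly smaller order than the target. A sharper CLT-based argument would also work but is unnecessary, since we only need a constant probability guarantee.
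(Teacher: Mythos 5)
Your proposal is correct and follows essentially the same route as the paper: compute $\E[S]$ and $\Var(S)$ exactly as series, approximate the mean by the integral $\int_m^\infty x^{-\a}\,\d x$ with an $O(m^{-\a})$ error, bound the variance by $O(m^{-(2\a-1)})$, and apply Chebyshev's inequality. The only difference is that you make the triangle-inequality bookkeeping between the mean shift and the fluctuation fully explicit, which the paper leaves implicit.
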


From now on we fix the constant $C_1$ given by Proposition~\ref{p:S}, and let $s_-,s_+$ be the endpoints of the confidence interval of values of~$S$:
\begin{equation}\label{eq:spm}
s_{\pm}: = \frac1{(\a-1)m^{\a-1}} \pm \frac{C_1}{m^{\a-\frac12}} = \left( \frac1{\a-1} \pm \frac{C_1}{\sqrt m} \right) \cdot \frac1{m^{\a-1}}.
\end{equation}
Then, Proposition~\ref{p:S} states that $\P(S\notin (s_-,s_+))<1/100$. 

Consider now the sums
$$
Z'_k:= Z_k^{(m-1)}= \sum_{j = 1}^{m-1}\frac{\eta_{j,k}}{j^\a }, \quad Z''_k:= Z_k^{(2m)}-Z'_k = \sum_{j = m}^{2m}\frac{\eta_{j,k}}{j^\a }.
$$
{
As the behaviour of the sums $Z_k^{(r)}$ is independent from that of $S$, it suffices to establish that the following conclusions hold with sufficiently high probability for any $s\in [s_-, s_+]$ (see Fig.~\ref{f:geometry})}
\begin{enumerate}
\item\label{c:five} There exist at least $5$ (random) colours $k=k_1,\dots,k_5$ such that $Z'_k<s<Z'_k+\frac{\eta_{m,k}}{m^\a }$.
\item\label{c:all} $Z'_k+Z''_k>s$ for all~$k$.
\end{enumerate}
For a given value $s$ we call $k$ such that $Z'_k<s$ the \emph{candidate colours}.

Note that these conditions, in a sense, disregard indices $k$ such that $Z'_k>s$: condition~\ref{c:all} is satisfied automatically, but such $k$ cannot be one of those appearing in condition~\ref{c:five}. We thus fix $n$ in such a way that even for the sum $S$ taking the smallest possible non-discarded value~$s_-$, we still have enough candidate colours $k$ for the first condition. Namely, we fix
\begin{equation}\label{eq:N}
n:= \Big\lceil \frac{100 m^\a }{\P(Z'_k<s_-)} \Big\rceil.
\end{equation}

\begin{figure}[!h!]
\includegraphics{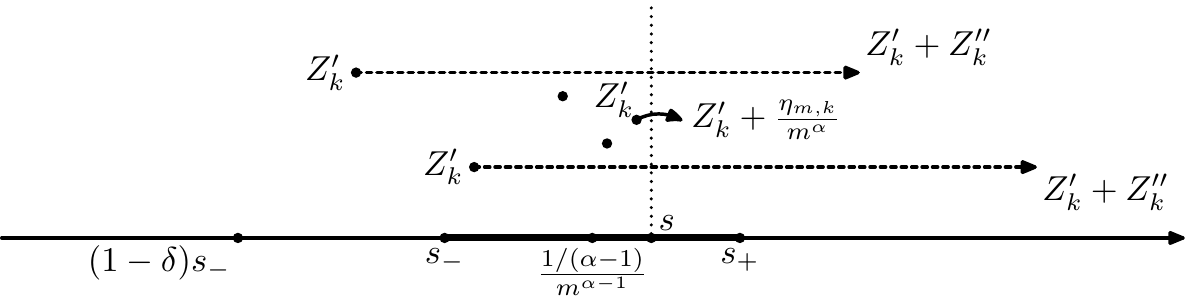}
\caption{Checking claims~\ref{c:five} and~\ref{c:all} conditionally on $\{S=s\}$, where $s\in [s_-,s_+]$. Bold dots represent candidate colours. They all belong to $[(1-\delta)s_-,s]$; we have to check $Z'_k+Z''_k>s$ for all of them (dashed horizontal arrows, ensuring claim~\ref{c:all}), and find at least five of them with $Z'_k+\frac{\eta_{m,k}}{m^\a}>s$ (curved arrow, claim~\ref{c:five}).}\label{f:geometry}
\end{figure}

The following will be used to show that conclusion~\ref{c:five} holds with probability close to 1. 
\begin{proposition}\label{p:Z-incr}
The density of the random variable $Z'_k$ is increasing on $[0,s_+]$ for all sufficiently large~$m$. In particular, for every $s\in [s_-,s_+]$ one has 
\begin{equation}\label{eq:Zp}
n\cdot \P\Big(s-\frac1{m^\a}<Z'_k<s\Big) >100.
\end{equation}
\end{proposition}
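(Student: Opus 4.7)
The plan is to first establish the monotonicity claim and then deduce the probability estimate from it.

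\textbf{Monotonicity.} I argue by induction on the number of summands, working with the partial sums $W_\ell := \sum_{j=1}^{\ell} \eta_{j,k}/j^\a$ (so that $Z'_k = W_{m-1}$). For the base case $\ell = 2$, a direct convolution gives
\[
f_{W_2}(t) = \frac{2^\a}{2^\a - 1}\bigl(e^{-t} - e^{-2^\a t}\bigr),
\]
whose derivative $f'_{W_2}(t) = \tfrac{2^\a}{2^\a - 1} e^{-t}\bigl(2^\a e^{-(2^\a - 1)t} - 1\bigr)$ is positive precisely on $[0, T_0)$ with $T_0 := \a \log 2/(2^\a - 1) > 0$; crucially, since $\a>1$ we have $2^\a>2$, so $T_0$ is a positive constant depending only on $\a$, not on $m$.

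For the inductive step, suppose $f_{W_{\ell-1}}$ is non-decreasing on $[0, T_0]$. Using
\[
f_{W_\ell}(t) = \int_0^t f_{W_{\ell-1}}(t-s)\,\ell^\a e^{-\ell^\a s}\,\d s,
\]
for $t_1 < t_2 \le T_0$ I split $f_{W_\ell}(t_2) - f_{W_\ell}(t_1)$ as
\[
\int_0^{t_1}\bigl[f_{W_{\ell-1}}(t_2 - s) - f_{W_{\ell-1}}(t_1 - s)\bigr] \ell^\a e^{-\ell^\a s}\,\d s + \int_{t_1}^{t_2} f_{W_{\ell-1}}(t_2 - s)\, \ell^\a e^{-\ell^\a s}\,\d s.
\]
Both terms are non-negative: the first because $t_1 - s$ and $t_2 - s$ both lie in $[0, T_0]$ with $t_1 - s < t_2 - s$, and $f_{W_{\ell-1}}$ is non-decreasing on $[0,T_0]$; the second trivially. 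Iterating, $f_{W_\ell}$ is non-decreasing on $[0,T_0]$ for every $\ell \ge 2$. Since $s_+ = \bigl(\tfrac{1}{\a-1} + o(1)\bigr)/m^{\a-1} \to 0$ as $m \to \infty$, for all sufficiently large $m$ we have $s_+ \le T_0$, so $f_{Z'_k} = f_{W_{m-1}}$ is non-decreasing on $[0, s_+]$.

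\textbf{Probability estimate.} Write $F(s) := \P(Z'_k \le s)$, and for $s = s_-$ decompose $F(s_-) = A + B$ with $A := \int_{s_- - 1/m^\a}^{s_-} f(t)\,\d t$ and $B := \int_0^{s_- - 1/m^\a} f(t)\,\d t$ (valid since $s_- > 1/m^\a$ for large $m$, as $s_- \sim 1/((\a-1)m^{\a-1}) \gg 1/m^\a$). By the monotonicity, $A \ge (1/m^\a)\,f(s_- - 1/m^\a)$ while $B \le (s_- - 1/m^\a)\,f(s_- - 1/m^\a)$, hence $A/B \ge 1/[m^\a(s_- - 1/m^\a)]$. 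Since $s_- \to 0$, this ratio exceeds $1/(m^\a - 1)$ for large $m$, which rearranges to $A \ge F(s_-)/m^\a$. For arbitrary $s \in [s_-, s_+]$, the interval $[s - 1/m^\a, s]$ is a rightward translate of $[s_- - 1/m^\a, s_-]$; a change of variable and the monotonicity of $f$ on $[0, s_+]$ give
\[
\P\bigl(s - 1/m^\a < Z'_k < s\bigr) \ge \P\bigl(s_- - 1/m^\a < Z'_k < s_-\bigr) = A \ge F(s_-)/m^\a.
\]
Since $n \ge 100\,m^\a/F(s_-)$ by the definition \eqref{eq:N}, we conclude $n \cdot \P(s - 1/m^\a < Z'_k < s) \ge 100$, with strict inequality for large $m$ (either because $s_- < 1$ strictly, or from the ceiling in the definition of $n$).

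The main technical point is choosing the right base case for the induction so that the interval of monotonicity does not shrink with $m$; the case $\ell=2$ works precisely because $\a > 1$ forces $T_0 > 0$ to be independent of $m$. The probability estimate is then a routine consequence of monotonicity combined with the definition of $n$.
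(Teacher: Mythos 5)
Your proof is correct and follows essentially the same route as the paper: the same explicit density for the two-term sum with the same threshold $\a\log 2/(2^\a-1)$, the observation that convolution with a positively supported density preserves monotonicity on an initial interval (which you spell out as an induction over summands rather than as a single convolution), and the same comparison of the mass on $[s-1/m^\a,s]$ with the total mass on $[0,s_-]$ via the increasing density. No gaps.
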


Let us now ensure that the condition~\ref{c:all} is satisfied with high probability. On the one hand, the added sum $Z''_k=\sum_{j=m}^{2m}\frac{\eta_{j,k}}{j^\a }$ also consists of many comparable summands, and hence admits a Large Deviation Theorem-type lower bound. 
\begin{proposition}\label{p:LargeDev}
There exists a constant $c_2>0$ such that for all sufficiently large $m$,
\begin{equation}\label{eq:Zpp}
\P\Big(Z''_k > \frac1{10\cdot (2m)^{\a-1}}\Big)\ge 1-e^{-c_2 m}.
 \end{equation}
\end{proposition}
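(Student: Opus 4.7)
The plan is to reduce Proposition~\ref{p:LargeDev} to a standard Cram\'er--Chernoff lower-tail bound for a sum of i.i.d.\ standard exponentials. The first step is the trivial but crucial deterministic bound: since $1/j^\a \ge 1/(2m)^\a$ for each $j \in \{m,\dots,2m\}$,
$$Z''_k \;\ge\; \frac{W_m}{(2m)^\a}, \qquad W_m := \sum_{j=m}^{2m}\eta_{j,k}.$$
Here $W_m$ is a sum of $m+1$ i.i.d.\ Exponential$(1)$ variables with mean $m+1$, so the task reduces to showing that $W_m \ge (m+1)/2$ with probability at least $1-e^{-c_2 m}$ for some $c_2 > 0$.

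This in turn is an immediate application of the exponential Markov inequality: for any $t>0$,
$$\P\Big(W_m < \tfrac{m+1}{2}\Big)\;\le\; e^{t(m+1)/2}\,\E\big[e^{-t\eta_{1,1}}\big]^{m+1}\;=\;\Big(\frac{e^{t/2}}{1+t}\Big)^{m+1}.$$
Optimising in $t$ (the optimum is at $t = 1$) gives decay rate $\log 2 - \tfrac12 > 0$, so I can take $c_2$ to be any positive number strictly less than $\log 2 - \tfrac12$.

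On the complementary event one has $Z''_k \ge (m+1)/(2\cdot (2m)^\a) \ge 1/(2^{\a+1} m^{\a-1})$, and I would then compare this directly with the target value $1/(10(2m)^{\a-1}) = 1/(5\cdot 2^\a m^{\a-1})$. The two agree in their $m$-dependence ($m^{-(\a-1)}$), and the inequality reduces to $5 \cdot 2^\a > 2^{\a+1}$, i.e.\ $5 > 2$, which holds unconditionally.

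There is no genuine obstacle here; the proof is essentially routine. The only mild subtlety is that the uniform weight bound $1/(2m)^\a$ is rather crude — it throws away a factor of up to $2^\a$ compared to the actual weights $1/j^\a$ — but as the final comparison shows, the slack between $1/(2^{\a+1} m^{\a-1})$ and $1/(10(2m)^{\a-1})$ is already comfortable enough for every $\a > 1$, so there is no need for a sharper Chernoff estimate that retains the non-uniform weights.
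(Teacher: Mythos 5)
Your proof is correct and follows essentially the same route as the paper: bound each weight $1/j^\alpha$ below by $1/(2m)^\alpha$, then apply a Chernoff lower-tail estimate to the resulting sum of i.i.d.\ Exponential$(1)$ variables. The only difference is cosmetic (you keep the mean at threshold $1/2$ and optimise the exponential moment explicitly, while the paper normalises by $m$ and uses the threshold $1/5$), and your final numerical comparison checks out.
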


On the other hand, the values of $S$ are (for large $m$) quite small, while the random variable $Z'_k$ is a sum of many independent random variables, and its density (or the partition function) should decrease quite fast near~$0$; we give a rigorous treatment of this informal argument below (see Definition~\ref{def:growth} below). In particular, it is natural to expect all of the estimated $\sim 100 m^\a $ values of $Z'_k$ that fall on $[0,s_-]$ to be concentrated near the right end of this interval. This is guaranteed by the following.

\begin{proposition}\label{p:delta}
For every $\delta>0$,
\[
\lim_{m\to\infty} \frac{m^\a  \P(Z'_k<(1-\delta) s_-)}{\P(Z'_k<s_-)} =0.
\]
\end{proposition}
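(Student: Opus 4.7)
The plan is to establish exponential-in-$m$ decay of the ratio via a Cram\'er-type large-deviation argument. Since $s_-$ is a constant-order left deviation of $Z'_k$ from its typical value $\E[Z'_k]\to \zeta(\alpha)$, both probabilities in the ratio are exponentially small in $m$, and the required super-polynomial decay will follow from the associated Cram\'er rate function being \emph{strictly} decreasing near $1/(\alpha-1)$.

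First, I would derive a Chernoff upper bound. Writing $\phi(\lambda):=\E[e^{-\lambda Z'_k}]=\prod_{j=1}^{m-1}(1+\lambda/j^\alpha)^{-1}$ and substituting $\lambda=tm^\alpha$, a Riemann-sum approximation gives
\[
\tfrac{1}{m}\log\phi(tm^\alpha)\,\longrightarrow\,-\Lambda_1(t),\qquad \Lambda_1(t):=\int_0^1\log(1+t/s^\alpha)\,ds,
\]
where the integrand is integrable at $0$ because $\alpha>1$. The Chernoff bound $\P(Z'_k<x)\le e^{\lambda x}\phi(\lambda)$, applied with $x=r/m^{\alpha-1}$ and optimised in $t>0$, yields $\P(Z'_k<r/m^{\alpha-1})\le e^{-mI(r)+o(m)}$, where $I(r):=\sup_{t>0}[\Lambda_1(t)-tr]$. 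Here $I$ is strictly decreasing on its domain: $\Lambda_1$ is strictly concave with $\Lambda_1'(0+)=+\infty$ and $\Lambda_1(t)=O(\log t)$ as $t\to\infty$, so the supremum is attained at a unique $t^*(r)>0$, and the envelope theorem gives $I'(r)=-t^*(r)<0$.

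For the matching lower bound on $\P(Z'_k<s_-)$, I would use exponential tilting. Let $\lambda^*:=t^*(r_-^{(m)})\,m^\alpha$ with $r_-^{(m)}:=m^{\alpha-1}s_-\to 1/(\alpha-1)$, and let $\wt{\P}$ be the law under which the $\eta_j$ remain independent, each $\eta_j$ being exponential with rate $1+\lambda^*/j^\alpha$. Direct computation (with Riemann-sum control) gives $\E_{\wt\P}[Z'_k]=s_-+o(\eps_m)$ and $\mathrm{Var}_{\wt\P}(Z'_k)=O(m^{-2\alpha+1})$, so Lyapunov's CLT under $\wt\P$ (the Lyapunov ratio is $O(m^{-1/2})$, since after tilting the summands remain exponentials with controlled rate ratios) yields $\wt\P(Z'_k\in[s_--\eps_m,s_-])\ge c>0$ for any $\eps_m$ with $m^{-\alpha+1/2}\ll\eps_m\ll s_-$. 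Using the Radon--Nikodym identity $d\P/d\wt\P=\phi(\lambda^*)e^{\lambda^* Z'_k}$ and noting $\lambda^*\eps_m=o(m)$ produces $\P(Z'_k<s_-)\ge e^{-mI(r_-^{(m)})-o(m)}$.

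Combining the two bounds gives
\[
\frac{\P(Z'_k<(1-\delta)s_-)}{\P(Z'_k<s_-)}\le \exp\!\bigl(-m\bigl[I((1-\delta)r_-^{(m)})-I(r_-^{(m)})\bigr]+o(m)\bigr),
\]
and by continuity of $I$ at $1/(\alpha-1)$ together with its strict monotonicity, the bracketed difference converges to the positive constant $I((1-\delta)/(\alpha-1))-I(1/(\alpha-1))>0$. This yields exponential-in-$m$ decay of the ratio, which easily absorbs the $m^\alpha$ prefactor. The main technical obstacle is the tilting step: verifying the Lyapunov condition for the non-identically-distributed tilted summands and controlling the Riemann-sum error $E_{\wt\P}[Z'_k]-s_-$ uniformly in $m$ is bookkeeping-heavy (though standard); by contrast, the strict monotonicity of $I$ is essentially a one-line consequence of the envelope theorem.
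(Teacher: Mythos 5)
Your proposal is correct in outline, but it takes a genuinely different and heavier route than the paper. The paper's argument is elementary: Lemma~\ref{l:m-beta} shows that the distribution function of $Z'_k$ is ``$m^\beta$-growing'' on $[0,s_-]$ for any $\beta<\frac{\alpha-1}{\alpha}$, i.e.\ $\P(Z'_k<x)\le e\,(x/y)^{m^\beta}\,\P(Z'_k<y)$, which is proved by isolating the partial sum $Z_k^{(p)}$ of the first $p=\lceil m^\beta\rceil$ terms, bounding its distribution function explicitly between $\frac1e C_p x^p$ and $C_p x^p$ via the convolution integral (using $p^\alpha s_-<1$), and noting that convolving with an independent positive variable preserves this growth (Lemma~\ref{lem:pgrow}). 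Plugging $x=(1-\delta)s_-$, $y=s_-$ gives the bound $m^\alpha e(1-\delta)^{m^\beta}\to0$ with no large-deviation machinery and, crucially, no lower bound on $\P(Z'_k<s_-)$ beyond what the growth inequality itself encodes. Your Cram\'er-type argument is sound: the scaling $\lambda=tm^\alpha$ is the right one, $\Lambda_1$ is indeed finite, concave, with $\Lambda_1'(0+)=+\infty$ (this uses $\alpha>1$), so $I$ is strictly decreasing and the ratio decays like $e^{-cm}$ --- a stronger conclusion than the paper's $e^{-cm^\beta}$. The price is the matching lower bound on $\P(Z'_k<s_-)$, which genuinely requires the tilting--plus--CLT step for a triangular array of non-identically distributed exponentials (Chebyshev alone under the tilted law does not localise $Z'_k$ to the one-sided window $[s_--\eps_m,s_-]$); to make $\E_{\wt{\P}}[Z'_k]$ hit $s_-$ exactly you should define $\lambda^*$ by the finite-$m$ stationarity equation $\sum_{j<m}(j^\alpha+\lambda^*)^{-1}=s_-$ rather than by the limiting optimiser. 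With those details filled in, your proof works; it is simply a longer path to a statement for which the paper's power-growth trick suffices.
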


Finally, we have that the expected number of candidate colours grows sub-exponentially with $m$. This will be combined with the previous two propositions to establish (ii) with high probability.
\begin{proposition}\label{p:subexp}
There is a constant {$C_2>0$} such that for all $m$ sufficiently large 
one has 
\[
n \cdot \P(Z'_k<{s_+}) <e^{C_2\sqrt m}. 
\]
\end{proposition}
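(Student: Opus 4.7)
The plan is to prove Proposition~\ref{p:subexp} by directly bounding the ratio $\P(Z'_k < s_+)/\P(Z'_k < s_-)$ via a change-of-measure argument that rescales the underlying exponential variables. From the definition of $n$ in~\eqref{eq:N},
\[
n \cdot \P(Z'_k < s_+) \le \frac{100\, m^\a \cdot \P(Z'_k < s_+)}{\P(Z'_k < s_-)} + \P(Z'_k < s_+),
\]
so it is enough to show that this ratio is bounded by $e^{O(\sqrt m)}$, as the polynomial factor $m^\a$ is easily absorbed.

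The main step is a scaling/tilting identity. Let $\beta := s_+/s_- > 1$ and let $\P_\beta$ denote the law on $(\eta_{j,k})_{j=1}^{m-1}$ obtained by replacing $\mathrm{Exp}(1)$ with $\mathrm{Exp}(\beta)$. Since $\mathrm{Exp}(\beta)$ coincides with $\mathrm{Exp}(1)/\beta$ in distribution, the random variable $Z'_k$ has the same law under $\P_\beta$ as $Z'_k/\beta$ has under $\P$, so
\[
\P(Z'_k < s_+) = \P\bigl(Z'_k/\beta < s_-\bigr) = \P_\beta(Z'_k < s_-).
\]
The Radon--Nikodym derivative $d\P_\beta/d\P$ equals $\beta^{m-1} \exp\bigl(-(\beta-1)\sum_{j=1}^{m-1}\eta_{j,k}\bigr)$, which is bounded above by $\beta^{m-1}$ since $\beta > 1$ and $\eta_{j,k} \ge 0$. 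Hence
\[
\P(Z'_k < s_+) = \E\bigl[\beta^{m-1}\, e^{-(\beta-1)\sum_j \eta_{j,k}}\, \one_{\{Z'_k < s_-\}}\bigr] \le \beta^{m-1}\, \P(Z'_k < s_-).
\]

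To finish, bound $\beta^{m-1}$. Writing $r := C_1(\a-1)/\sqrt m$, from \eqref{eq:spm} we have $\beta = (1+r)/(1-r) = 1 + 2r + O(r^2)$, so $\log\beta \le 3r$ for all sufficiently large $m$ and therefore $\beta^{m-1} \le e^{3 C_1(\a-1)\sqrt m}$. Combining this with the display above,
\[
n \cdot \P(Z'_k < s_+) \le 101\, m^\a\, \beta^{m-1} \le 101\, m^\a\, e^{3 C_1(\a-1)\sqrt m} < e^{C_2\sqrt m}
\]
for any fixed $C_2 > 3 C_1(\a-1)$ and all $m$ large enough that $101\, m^\a < e^{(C_2 - 3C_1(\a-1))\sqrt m}$.

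The argument presents no substantial obstacle: its core is the change-of-measure identity $d\P_\beta/d\P = \beta^{m-1} e^{-(\beta-1)\sum \eta_{j,k}}$ together with the trivial bound $e^{-(\beta-1)\sum \eta_{j,k}} \le 1$ valid for $\beta > 1$. The only detail worth noting is that $s_- > 0$ (so that $\P(Z'_k < s_-) > 0$ and $n$ is well-defined), which holds once $m > (C_1(\a-1))^2$.
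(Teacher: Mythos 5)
Your proof is correct and follows essentially the same route as the paper: your change-of-measure bound $\P(Z'_k<s_+)\le(s_+/s_-)^{m-1}\,\P(Z'_k<s_-)$ is exactly the paper's Lemma~\ref{lem:s's} (the Radon--Nikodym computation with $e^{-(\beta-1)\sum_j\eta_{j,k}}\le 1$ is the paper's change of variables $y_j=(s/s')x_j$ combined with the monotonicity of the exponential densities, phrased probabilistically). The remaining step, bounding $(s_+/s_-)^{m}$ by $e^{O(\sqrt m)}$ via \eqref{eq:spm} and absorbing the polynomial factor $m^\a$, is identical to the paper's.
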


\begin{proof}[Proof of Theorem \ref{t:Polya}]
Fix $\a>1$. Let $J_m$ be the event that only colour 0 is selected more than $2m-1$ times and there are at least 5 colours selected exactly $m-1$ times. We need to show that for sufficiently large $m$ (and with $n$ as in \eqref{eq:N}), $\P(J_m)>4/5$.

For $s\in [s_-,s_+]$, let 
\[
K_{m,s}=\#\{k\in 1,\dots, n:Z'_k<s<Z'_k+\frac{\eta_{m,k}}{m^\a }\},
\] and 
\[
I_{m,s}=\cap_{k=1}^n\{Z'_k+Z''_k>s\}.
\] 
Then, $\{K_{m,s}\ge 5\}$ and $I_{m,s}$ are the events that correspond to claims~\ref{c:five} and~\ref{c:all} respectively for a given value of~$s$; denote also the even that both these claims (for a given $s$) are fulfilled by
\begin{align}
J_{m,s}:=\{K_{m,s}\ge 5\} \cap I_{m,s}.
\end{align}
We will show that for all $m$ sufficiently large, 
\begin{equation}
\label{Jclaim}
\inf_{s \in [s_-,s_+]}\P(J_{m,s})\ge 88/100.
\end{equation}

To do so, first note that
\[R_k:=\{Z'_k<s<Z'_k+\frac1{m^\a }\}\cap \{\eta_{m,k}>1\}\subset \{Z'_k<s<Z'_k+\frac{\eta_{m,k}}{m^\a }\}.\]
By Proposition \ref{p:Z-incr} we have that $\P(Z'_k<s<Z'_k+\frac1{m^\a })>100/n$ and since $\eta_{m,k}$ is independent of $Z'_k$ and $\P(\eta_{m,k}>1)=e^{-1}$ we have $\P(R_k)\ge 100/(en)\ge 30/n$. 

Now, let $X$ be the number of $k$ for which $R_k$ occurs. Then $X$ is a binomial random variable with  expectation $\E[X]=n \cdot\P(R_k) \ge 30$.  Now, 
\[\P(X<5)\le \P(|X-\E[X]|>\E[X]-5),\] 
which by Chebyshev's inequality is at most 
\[
\frac{\Var(X)}{(\E[X]-5)^2} \le 
\frac{n\P(R_k)}{n\P(R_k)-5}\cdot \frac1{n \P(R_k)-5}\le \frac{30}{25^2}<\frac1{10}.
\] 
Thus $\P(X\ge 5)\ge 9/10$ and since $K_{m,s}\ge X$ we have $\P(K_{m,s}\ge 5)\ge 9/10$ for all $s\in [s_-,s_+]$.

Turning our attention to the event $I_{m,s}$, let $\Col=\{k\le n:Z'_k<s\}$. Then, the event $I_{m,s}$ (up to sets of measure 0) is equal to the event 
\[
I'_{m,s}:=\{Z_k'+Z_k''>s \text{ for every }k \in \Col\}
\] 
(because if $k \notin \Col$ then $Z'_k\ge s$ so $Z'_k+Z''_k>s$ a.s.). Choose 
\[
\delta=\frac{\a-1}{20\cdot 2^\a },
\]
and let $s'_-=(1-\delta)s_-$. If $Z'_k>s'_-$ and $Z_k''>\frac1{10\cdot (2m)^{\a-1}}$ then 
\begin{align*}
Z'_k+Z''_k &> 
 (s_- - \delta s_-) + \frac1{10\cdot (2m)^{\a-1}}
 \\
&> \frac1{m^{\a-1}}\cdot \left(\frac1{\a-1} - \frac{C_1}{\sqrt m} - \frac{\delta}{\a-1} + \frac1{10\cdot 2^{\a-1}}\right)\\
&=\frac1{m^{\a-1}}\cdot \left(\frac1{\a-1} - \frac{C_1}{\sqrt m} + \frac1{20\cdot 2^{\a-1}}\right).
\end{align*}
For $m$ sufficiently large we have 
$\frac1{20\cdot 2^{\a-1}}>2\frac{C_1}{\sqrt m}$, and hence for all such $m$, 
\[
Z'_k + Z''_k > \frac1{m^{\a-1}}\cdot \left(\frac1{\a-1} + \frac{C_1}{\sqrt m} \right) = s_+ \ge s,
\]
so 
\begin{align*}
I'_{m,s}&\supset \Big\{\text{$\forall k\in \Col$ we have }Z'_k>s'_- \text{ and }Z_k''>\frac1{10\cdot (2m)^{\a-1}}\Big\}\\
&\supset \Big\{Z'_k>s'_-\quad \text{ $\forall k \in [n]$ }\Big\}\cap \Big\{ Z_k''>\frac1{10\cdot (2m)^{\a-1}}\quad \text{ $\forall k\in \Col$}\Big\}.
\end{align*}	
Thus,
\begin{align}
(I'_{m,s})^c&\subset \Big(\cup_{k=1}^n \{Z'_k<s'_-\}\Big)\cup \Big\{ Z_k''\le \frac1{10\cdot (2m)^{\a-1}}\quad \text{for some $ k\in \Col$}\Big\}.\label{ouch1}
\end{align}
The probability of the first event on the right hand side of \eqref{ouch1} is at most $n\cdot \P(Z'_k<s'_-)$ which by the choice of $n$ satisfies
\begin{align*}
n\cdot \P(Z'_k<s'_-)&\le \frac{101 m^\a}{\P(Z'_k<s_-)} \cdot\P(Z'_k<s'_-)
\\
&=101 \, \frac{m^\a\P(Z'_k<(1-\delta) s_-)}{\P(Z'_k<s_-)}.
\end{align*}
By Proposition \ref{p:delta} the product in the right hand side is less than $1/100$ for all $m$ sufficiently large. 

Now we turn to the probability of the second event on the right hand side of \eqref{ouch1}. It is equal to 
\begin{align}
\P\Big( Z_k''\le \frac1{10\cdot (2m)^{\a-1}}\quad \text{for some $ k\in \Col$}\Big)
&
\le \E \Bigg[ \# \left\{ k \in \Col \text{ such that }  Z_k''\le \frac1{10\cdot (2m)^{\a-1}} \right\} \Bigg]\\
&=\P\Big(Z_1''\le \frac1{10\cdot (2m)^{\a-1}}\Big)\E[\#\Col],\label{ouch2}
\end{align}
where in the last step we have used the fact that the $Z''_k$ are independent of $\Col$. Note that $\#\Col$ is dominated by a Bin$(n,\P(Z'_k<s_+))$ random variable, so by Propositions \ref{p:subexp} and \ref{p:LargeDev}, the value in~\eqref{ouch2} is at most
\[e^{-c_2 m}\cdot e^{C_2\sqrt m} = o(1) \quad \text{ as } m\to\infty.\]
In particular, this probability becomes less than $1/{100}$ for all sufficiently large~$m$, so for all such $m$, $\P(I_{m,s})=\P(I'_{m,s})\ge 98/100$ for all $s\in[s_-,s_+]$.

We have shown that for all $m$ sufficiently large (and $n$ defined by \eqref{eq:N}), for all $s \in [s_-,s_+]$, 
\[\P(J_{m,s}^c)\le  \P(\{K_{m,s}< 5\}) + \P(I_{m,s}^c) \le \frac1{10}+\frac{2}{100}.\]
Therefore the claim \eqref{Jclaim} holds. To complete the proof, we simply condition on $S$. Note that 
\begin{align*}
\P(J_m)&\ge \P\big(J_m|S\in (s_-,s_+)\big)\cdot \P\big(S \in (s_-,s_+)\big)\\
&\ge \inf_{s \in [s_-,s_+]}\P(J_{m,s})\cdot \P\big(S \in (s_-,s_+)\big)\\
&\ge \frac{88}{100} \cdot \frac{99}{100}>\frac45,
\end{align*}
where the penultimate bound uses Proposition \ref{p:S}, and is valid for all $m$ sufficiently large.
\end{proof}

\subsection{Proofs of auxiliary statements}

\begin{proof}[Proof of Proposition~\ref{p:S}]

		Let us estimate the expectation and the variance of the variable $S$:
		\[
		\E [S] = \sum_{j = m}^{\infty} \frac1{j^\a}.
		\]
		Comparing to the integral gives 
		\[
			\sum_{j = m}^{\infty} \frac1{j^\a} > \int_m^{\infty} x^{-\a}\d x = \frac1{(\a-1)m^{\a-1}} > \sum_{j = m+1}^{\infty} \frac1{j^\a},
		\]
		hence 
		\[
		\left| \E [S ]- \frac1{(\a-1)m^{\a-1}} \right| \le \frac1{m^\a }.
		\]
		In the same way, 
		\[
		\Var(S) =  \sum_{j = m}^{\infty} \frac1{j^{2\a}}, 
		\]
		and thus 
		\[
		\frac1{(2\a-1)m^{2\a-1}} < \Var(S) < \frac1{(2\a-1)m^{2\a-1}} + \frac1{m^{2\a}}.
		\]
		This gives a standard deviation of the type $\const \cdot m^{-(\a-\frac12)}$. Applying Chebyshev's inequality, we see that there exists a constant $C_1$ such that for all sufficiently large $m$ one has 
		\begin{equation}
		\P\left(\Big|S-\frac1{(\a-1)m^{\a-1}}\Big| < \frac{C_1}{m^{\a-\frac12}}\right) \ge 1-\frac1{100}.
		\end{equation}
\end{proof}

\begin{proof}[Proof of Proposition~\ref{p:Z-incr}]
First, the density of the sum $\frac{\eta_{k,1}}{1^\a }+\frac{\eta_{k,2}}{2^\a }$ can be computed explicitly as
		\[\rho(t) = \frac{2^\a}{2^\a-1}\left[e^{-t}-e^{-2^\a t}\right]\indic{t>0}.
		\]
		The derivative is positive for $t<(2^\a-1)^{-1}\log 2^\a$, so if $m$ is sufficiently large then $\rho$ is increasing on $[0,s_+]$.
		
		Next, if a (density) function is supported on~$\R_+$ and is increasing on $[0,s_+]$, its convolution with any density function supported on the positive numbers is still increasing on $[0,s_+]$. This proves the first claim of the proposition. It also implies that for $0\le a<b\le s_+$ we have 
		\[(b-a)^{-1}\P(a<Z'_k<b)\ge b^{-1}\P(Z'_k<b),
		\]
	whence for any $s\in [s_-,s_+]$ we have 
\begin{align*}
	n\cdot \P\Big(s-\frac1{m^\a}<Z'_k<s\Big) &\ge n\cdot \P\Big(s_- -\frac1{m^\a}<Z'_k<s_-\Big)\\
	 & \ge 
 n\cdot  \P(Z'_k<s_-) \cdot \frac1{m^\a s_-}\\
 & \ge 100 m^\a  \cdot \frac1{m^\a s_-}
\end{align*}
which is greater than 100 for $m$ sufficiently large so that $s_-<1$.
\end{proof}

\begin{proof}[Proof of Proposition~\ref{p:LargeDev}]
The sum $Z_k''$ can be estimated from below as
\[
Z_k'' =\sum_{j = m}^{{2m}} \frac{\eta_{j,k}}{j^\a} > \frac1{(2m)^\a } \sum_{j=m}^{{2m}} \eta_{j,k} \ge 
\frac1{2\cdot (2m)^{\a-1}} \cdot \frac1{m}\sum_{j=m+1}^{2m} \eta_{j,k}.
\]
The statement of the proposition then follows directly from standard large deviations estimates (Chernoff's bound)
applied to the i.i.d. random variables $\eta_{j,k} \sim \mathrm{Exp}(1)$, since the probability of the event
$$
\Big\{\frac1{m}\sum_{j=m + 1}^{2m} \eta_{j,k} \le \frac15\Big\}
$$
is exponentially small in~$m$.
\end{proof}
In order to prove Proposition \ref{p:subexp} we will use the following lemma.
\begin{lemma}
		\label{lem:s's}
		For any $s'>s>0$ we have 
		$$
		\P(Z'_k<s')<\left(\frac{s'}{s} \right)^{m} \P(Z'_k<s).
		$$
	\end{lemma}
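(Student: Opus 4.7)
My plan is to establish the lemma by a direct scaling argument on the CDF of $Z'_k$. Using the joint density of the i.i.d.\ $\mathrm{Exp}(1)$ variables $(\eta_{1,k},\dots,\eta_{m-1,k})$,
\[
\P(Z'_k<s') \;=\; \int_{(0,\infty)^{m-1}} \mathbf{1}\!\left[\sum_{j=1}^{m-1}\frac{y_j}{j^\a} < s'\right] e^{-(y_1+\dots+y_{m-1})}\, \d y_1\cdots \d y_{m-1},
\]
I would then substitute $y_j = (s'/s)\,z_j$. The Jacobian contributes a factor $(s'/s)^{m-1}$ and the linear constraint becomes $\sum_j z_j/j^\a < s$, yielding
\[
\P(Z'_k<s') \;=\; \Big(\tfrac{s'}{s}\Big)^{m-1}\! \int_{(0,\infty)^{m-1}}\! \mathbf{1}\!\left[\sum_{j=1}^{m-1}\frac{z_j}{j^\a} < s\right] e^{-(s'/s)(z_1+\dots+z_{m-1})}\, \d z_1\cdots \d z_{m-1}.
\]

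Next I would compare the new integrand to $e^{-(z_1+\dots+z_{m-1})}$. Since $s'/s>1$ and the region of integration lies in $(0,\infty)^{m-1}$, the pointwise strict inequality $e^{-(s'/s)(z_1+\dots+z_{m-1})} < e^{-(z_1+\dots+z_{m-1})}$ holds on a set of positive Lebesgue measure inside $\{\sum_j z_j/j^\a<s\}$. Consequently the integral on the right is strictly smaller than $\P(Z'_k<s)$, and I conclude
\[
\P(Z'_k<s') \;<\; \Big(\tfrac{s'}{s}\Big)^{m-1} \P(Z'_k<s) \;<\; \Big(\tfrac{s'}{s}\Big)^{m} \P(Z'_k<s),
\]
the last strict inequality holding because $s'/s>1$.

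I do not foresee any genuine obstacle: the only idea is to dilate the integration variables so that the event $\{Z'_k<s'\}$ is transported onto $\{Z'_k<s\}$, at the cost of a deterministic Jacobian together with a pointwise loss in the exponential density. The slack between the natural exponent $m-1$ produced by this computation and the stated exponent $m$ is harmless and makes the statement cleaner to invoke in the proof of Proposition~\ref{p:subexp}. (Equivalently, one can phrase the argument as $F(t):=\P(Z'_k<t)=t^{m-1}G(t)$ for a strictly decreasing $G$, obtained by substituting $y_j=tu_j$; then $F(s')/F(s)=(s'/s)^{m-1} G(s')/G(s)<(s'/s)^{m-1}$.)
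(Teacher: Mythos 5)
Your proof is correct and follows essentially the same route as the paper: a dilation change of variables carrying the event $\{Z'_k<s'\}$ onto $\{Z'_k<s\}$, with the Jacobian producing the factor $(s'/s)^{m-1}$ and the monotone decrease of the exponential density giving the pointwise comparison, followed by the harmless relaxation of the exponent from $m-1$ to $m$. The only cosmetic difference is that you integrate against the joint density of the $\eta_{j,k}$ directly, whereas the paper works with the rescaled variables $x_j=\eta_{j,k}/j^{\a}$ and their densities $\rho_j$.
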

	\begin{proof}
		Let $\rho_j (x) := j^\a  \exp ( -j^\a  x)\indic{x>0}$ denote the density of $j^{-\a} \eta_{j,k}$. Then
		\begin{equation}\label{eq:P-number}
		\P(Z'_k<s') = \int_{x_1+\dots+x_{m-1}<s'} \prod_{i=1}^{m-1}\rho_i(x_i)\, \d x_1 \cdots \d x_{m-1}.
		\end{equation}
		Now, let us make a change of variables in~\eqref{eq:P-number}, denoting $y_j = \frac{s}{s'}x_j$. We get 
		\[
			\left(\frac{s'}{s} \right)^{m-1} \int_{y_1+\cdots+y_{m - 1}<s} \prod_{i=1}^{m-1} \rho_i\big(\frac{s'}{s} y_i\big) \, \d y_1 \cdots \d y_{m-1},
		\]
		and as all the densities $\rho_j$ are monotone decreasing, and $ \frac{s'}{s} y_j>y_j$, this is less than
		\[
		\left(\frac{s'}s \right)^{m-1} \int_{y_1+\cdots+y_m<s} \prod_{i=1}^{m-1} \rho_i(y_i) \, \d y_1 \cdots \d y_{m-1} = \left(\frac{s'}{s} \right)^{m-1} \P(Z'_k<s)\le \left(\frac{s'}s \right)^m \P(Z'_k<s).
		\]
	\end{proof}

\begin{proof}[Proof of Proposition~\ref{p:subexp}]
We apply Lemma~\ref{lem:s's} with $s = s_-$ and $s' = s_+$ to obtain 
		$$
			\P(Z'_k<s_+)<\left(\frac{s_+}{s_-} \right)^m \P(Z'_k<s_-).
		$$ 
		Now, 
		\[
			\frac{s_+}{s_-} = \frac{1+\frac{C_1{(\a-1)}}{\sqrt m}}{1-\frac{C_1{(\a-1)}}{\sqrt m}} = \Big(1+\frac{2C_1(\a-1)}{\sqrt m} + o(\frac1{\sqrt m})\Big).
		\]
		As $n\P(Z'_k<s_-) \sim 100 m^\a $ due to~\eqref{eq:N}, we have 
		\[
			n \P(Z'_k<s_+) < 100 m^\a  (1+o(1)) \cdot \left( 1+\frac{2C_1(\a-1)}{\sqrt m} + o(\frac1{\sqrt m}) \right)^m < e^{(2C_1(\a-1)+1) \sqrt m}
		\]
		for all $m$ large enough.
\end{proof}

To prove Proposition \ref{p:delta}, we will need an estimate in the other direction than for the proof of Proposition~\ref{p:subexp}. If the estimates of Lemma~\ref{lem:s's} are saying that the partition function grows \emph{at most} as $m$th power of the argument, for the proof of this proposition, we will need a \emph{lower} estimate, though by some smaller power. Namely, let us introduce the following definition.
	\begin{definition}\label{def:growth}
		For $s>0$, a non-atomic positive random variable $\xi$ is \emph{$p$-growing up to $s$} if for any $0<x<y\le s$ one has 
		\begin{equation}\label{eq:def-growth}
		\P(\xi<x)\le e \left(\frac xy \right)^p \P(\xi<y).
		\end{equation}
	\end{definition}

Next, note that establishing such a growth for a sum of a part of the summands in a large sum implies the same growth for the total sum: 
	\begin{lemma}
		\label{lem:pgrow}
		Let $\xi,\eta$ be independent random variables, where $\xi$ is $p$-growing up to $s$, and $\eta$ is positive. 
		Then, $\xi+\eta$ is also $p$-growing up to $s$.
	\end{lemma}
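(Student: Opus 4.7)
The plan is to condition on the value of $\eta$ and reduce to the inequality for $\xi$ alone, exploiting the fact that subtracting a positive quantity $t$ from both $x$ and $y$ only makes the ratio $(x-t)/(y-t)$ smaller (provided $x<y$).

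Concretely, let $\mu_\eta$ be the law of $\eta$, which is supported on $(0,\infty)$. Fix $0<x<y\le s$. Then, by independence,
\[
\P(\xi+\eta<x) \;=\; \int_{(0,x)} \P(\xi<x-t)\,\d\mu_\eta(t),
\]
since $\P(\xi<x-t)=0$ once $t\ge x$ ($\xi$ being positive). For each $t\in(0,x)$ we have $0<x-t<y-t<y\le s$, so the $p$-growing property \eqref{eq:def-growth} of $\xi$ applies and yields
\[
\P(\xi<x-t)\;\le\; e\left(\frac{x-t}{y-t}\right)^{p}\P(\xi<y-t).
\]

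Next I would note the elementary inequality $\frac{x-t}{y-t}\le \frac{x}{y}$ for $0\le t<x<y$: indeed the function $t\mapsto (x-t)/(y-t)$ has derivative $(x-y)/(y-t)^2<0$, so it is maximized at $t=0$ with value $x/y$. Substituting this into the previous estimate gives
\[
\P(\xi<x-t)\;\le\; e\left(\frac{x}{y}\right)^{p}\P(\xi<y-t) \qquad\text{for every }t\in(0,x).
\]

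Finally, I would integrate against $\mu_\eta$ and extend the range of integration to $(0,y)$ (the integrand is nonnegative, and $\P(\xi<y-t)=0$ for $t\ge y$), obtaining
\[
\P(\xi+\eta<x)\;\le\; e\left(\frac{x}{y}\right)^{p}\!\!\int_{(0,y)}\P(\xi<y-t)\,\d\mu_\eta(t) \;=\; e\left(\frac{x}{y}\right)^{p}\P(\xi+\eta<y),
\]
which is exactly the $p$-growing property for $\xi+\eta$ up to $s$. There is no real obstacle here; the only point that requires a moment's thought is the monotonicity $\frac{x-t}{y-t}\le\frac{x}{y}$, which is what allows the factor $(x/y)^p$ to come out of the integral uniformly in $t$.
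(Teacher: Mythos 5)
Your proof is correct and follows essentially the same route as the paper's: condition on the value of $\eta$, apply the $p$-growing property of $\xi$ to the shifted thresholds, and use the monotonicity $\frac{x-t}{y-t}\le\frac{x}{y}$ before integrating out $\eta$. The paper merely packages the same idea by first observing that $\xi+a$ is $p$-growing for every fixed $a>0$ and then taking expectations over $\eta$.
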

	\begin{proof}
		First, for any $a>0$ the shifted random variable $\xi+a$ is $p$-growing up to $s$: for $x\le a$ the inequality~\eqref{eq:def-growth} holds automatically, while for $x>a$ it follows immediately from the inequality $\frac{x-a}{y-a}< \frac xy$.  Thus, 
			$$\P(\xi+\eta<x \mid \eta)\le e \left(\frac xy\right)^p\P(\xi+\eta<y \mid \eta)$$ almost surely, and taking expectations gives the result.
	\end{proof}

Our final lemma finally establishes the desired growth:
\begin{lemma}\label{l:m-beta}
There exists $\beta>0$ such that for all $m$ sufficiently large $Z'_k$ is $m^{\beta}$-growing on $[0,s_-]$.
\end{lemma}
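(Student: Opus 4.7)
The plan is to reduce the claim via Lemma \ref{lem:pgrow} to a sum involving only the first $r$ terms, for a suitably chosen $r = r(m) \to \infty$ satisfying $r^\alpha s_- \to 0$. Concretely, I write $Z'_k = G + R$ with
\[
G = \sum_{j=1}^{r}\frac{\eta_{j,k}}{j^\alpha}, \qquad R = \sum_{j=r+1}^{m-1}\frac{\eta_{j,k}}{j^\alpha}.
\]
Since $R$ is a positive random variable independent of $G$, Lemma \ref{lem:pgrow} reduces matters to showing that $G$ alone is $m^\beta$-growing up to $s_-$ for some $\beta > 0$ and all large $m$.

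To handle $G$, I use the same change-of-variables trick as in Lemma \ref{lem:s's} but performing the rescaling $u_j = t v_j$ simultaneously in all $r$ coordinates. This gives
\[
\P(G < t) \;=\; t^{r}\cdot\Big(\prod_{j=1}^{r}j^{\alpha}\Big)\cdot\psi(t), \qquad \psi(t) := \int_{\Delta_{r}}\exp\!\Big(-t\sum_{j=1}^{r} j^{\alpha} v_{j}\Big)\,dv,
\]
where $\Delta_{r} = \{v_{j} > 0,\ \sum v_{j} < 1\}$ has volume $1/r!$. Since on $\Delta_{r}$ one has $\sum j^{\alpha} v_{j} \le r^{\alpha} \sum v_{j} \le r^{\alpha}$, the integrand is pinched between $e^{-r^{\alpha} t}$ and $1$, yielding
\[
\frac{e^{-r^{\alpha} t}}{r!} \;\le\; \psi(t) \;\le\; \frac{1}{r!}.
\]
Consequently, for $0 < x < y \le s_-$,
\[
\frac{\P(G<x)}{\P(G<y)} \;=\; \Big(\frac{x}{y}\Big)^{r}\cdot\frac{\psi(x)}{\psi(y)} \;\le\; \Big(\frac{x}{y}\Big)^{r} \cdot e^{r^{\alpha} y}.
\]

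It remains to pick $r$. Take $r = \lceil m^{\beta}\rceil$ with any $\beta \in (0,(\alpha-1)/\alpha)$; this interval is non-empty precisely because $\alpha > 1$. Recalling \eqref{eq:spm} we have $s_{-} = \Theta(m^{-(\alpha-1)})$, hence $r^{\alpha} s_{-} = O(m^{\alpha\beta - (\alpha-1)}) \to 0$, so for all $m$ sufficiently large $r^{\alpha} s_{-} \le 1$ and the factor $e^{r^{\alpha} y}$ above is at most $e$. The displayed inequality then gives
\[
\frac{\P(G<x)}{\P(G<y)} \le e\Big(\frac{x}{y}\Big)^{r} \le e\Big(\frac{x}{y}\Big)^{m^{\beta}},
\]
so $G$ is $m^{\beta}$-growing up to $s_{-}$; combined with Lemma \ref{lem:pgrow} this proves the claim. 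The only quantitative subtlety is the competition between wanting $r$ large (to obtain a useful exponent $p = m^{\beta}$) and keeping $r^{\alpha} s_{-} \lesssim 1$ (so that the simplex integral $\psi$ is essentially constant across $[0,s_{-}]$), and this is resolved exactly because $\alpha>1$ forces $s_{-}^{-1/\alpha}$ to be a genuine positive power of $m$.
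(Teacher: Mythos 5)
Your proof is correct and follows essentially the same approach as the paper: truncating to the first $\lceil m^\beta\rceil$ terms, comparing the simplex integral to the simplex volume via the uniform bound $\sum_j j^\alpha v_j \le r^\alpha$ on the domain, choosing $\beta<(\alpha-1)/\alpha$ so that $r^\alpha s_- \to 0$, and finishing with Lemma~\ref{lem:pgrow}. The only cosmetic difference is that you package the exponential factor as a ratio $\psi(x)/\psi(y)$ rather than stating two-sided bounds $\tfrac1e C_p x^p \le \P(Z_k^{(p)}<x) \le C_p x^p$ and deducing $p$-growth from them, but the content is identical.
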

\begin{proof}
Actually, we will show that the conclusion of the lemma is satisfied (for all $m$ large enough) for any $\beta<\frac{\a-1}\a $. Namely, let $p=\lceil m^{\beta} \rceil$ and consider the sum $Z_k^{(p)}$ of the first $p$ summands in the definition of~$Z'_k$. We are going to show that there exists a constant $C_p$ such that one has 
\begin{equation}\label{eq:x-p}
\forall x\in [0,s_-] \qquad \frac1{e} \, C_p x^p\le \P(Z_k^{(p)}<x) \le C_p x^p.
\end{equation}
It is easy to see that~\eqref{eq:x-p} implies that $Z_k^{(p)}$ is $p$-growing on $[0,s_-]$, and due to Lemma~\ref{lem:pgrow} this will imply the desired $p$-growth of $Z'_k = Z_k^{(p)} + \eta$, where $\eta$ is the sum of the last $m-1-p$ summands in the definition of $Z'_k$.

Now, the probability $\P(Z_k^{(p)}<x)$ is given (cf.~\eqref{eq:P-number}) by an integral 
		\begin{equation}\label{eq:P-number-new}
		\P(Z_k^{(p)}<x) = \int_{x_1+\cdots+x_p<x} \prod_{i=1}^p\rho_i(x_i) \, \d x_1 \cdots \d x_p,
		\end{equation}
where $\rho_j(x_j)=j^\a \exp ( -j^\a x_j)\indic{x_j>0}$. The volume of the domain of integration is equal to $\frac{x^p}{p!}$, and the function under the integral is equal to 
$$
\rho_1(x_1)\cdots \rho_p(x_p) = (p!)^\a \cdot \exp (-\sum_{j=1}^p j^\a  x_j).
$$
Hence, 
\begin{equation}\label{eq:Z-p!}
 \frac{x^p}{(p!)^{1-\a}}\cdot \exp (-p^\a  x) < \P(Z_k^{(p)}<x) < \frac{x^p}{(p!)^{1-\a}},
\end{equation}
where we have used the inequalities $\sum_{j=1}^p j^\a  x_j \le p^\a  \sum_{j=1}^p x_j \le p^\a  x$ that hold on the domain of integration in~\eqref{eq:P-number-new}. Now, take $C_p:=\frac1{(p!)^{1-\a}}$; then~\eqref{eq:Z-p!} implies the desired~\eqref{eq:x-p}, once $p^\a  s_-<1$. As $s_-<\frac1{(\a-1)m^{\a-1}}$, and $p\sim m^{\beta}$, this inequality holds for all sufficiently large $m$ if $\a \beta< \a-1$. The conclusion of the lemma holds (for all sufficiently large $m$) for any $\beta<\frac{\a-1}\a $, thus concluding the proof.
\end{proof}

\begin{proof}[Proof of Proposition~\ref{p:delta}] 
Fix $\beta>0$ as in Lemma~\ref{l:m-beta} taking $x=(1-\delta)s_-$ and $y=s_-$ in~\eqref{eq:def-growth} then provides the upper bound 
	\begin{equation}\label{eq:m-beta}
		\frac{m^\a \P(Z'_k<(1-\delta)s_-)}{\P(Z'_k<s_-)}< m^\a e \left(1-\delta \right)^{m^\beta},
	\end{equation}
	and for any fixed $\delta>0$ the right hand side of~\eqref{eq:m-beta} converges to $0$ as $m\to\infty$.
\end{proof}

\section*{Acknowledgements}
The authors thank an anonymous referee of the paper \cite{compass} for asking a question that in part motivated this work. CH's research is supported by the Centre
for Stochastic Geometry and Advanced Bioimaging, funded by grant 8721 from the Villum Foundation.
MH's research is supported by Future Fellowship FT160100166 from the Australian Research Council. 
VK's research is partially supported by the project ANR Gromeov (ANR-19-CE40-0007), as well as by by the Laboratory of Dynamical Systems and Applications NRU HSE, of the Ministry of science and higher education of the RF grant ag. No. 075-15-2019-1931.

\bibliographystyle{plain}

\begin{thebibliography}{99}
	
	\bibitem{benj}
	M. Bena\"im, I. Benjamini, J. Chen, and Y. Lima. 
	\newblock A generalized P\'olya's urn with graph based interactions.
	\newblock {\em Random Structures Algorithms}, 46(4):614--634, 2015.
	
	
	
	
	\bibitem{benj2}
	J.~Chen and C.~Lucas.
	\newblock A generalized P\'olya's urn with graph based interactions: convergence at linearity.
	\newblock {\em Electron. Commun. Probab.}, 19, paper no.~67, 2016.
	

\bibitem{ChungLu}
F.~Chung. and L.~Lu.
\newblock {\em Complex Graphs and Networks.} American Mathematical Society, Providence, RI, 2006.

	\bibitem{InfWARM1}
Y.~Couzini\'e and C.~Hirsch.
\newblock Infinite WARM graphs I: weak reinforcement regime.
\newblock {\em In preparation} 2020.

\bibitem{davis}
B.~Davis.
\newblock Reinforced random walk
\newblock {\em Probab.~Theory and Related Fields}, 84(2):203--229, 1990.

\bibitem{compass}
C.~Hirsch,  M.~Holmes and V.~Kleptsyn.
\newblock Absence of WARM percolation in the very strong reinforcement regime.
\newblock \emph{To appear in Ann.~Appl.~Prob.}, 2020.

\bibitem{InfWARM3}
C.~Hirsch, M. Holmes and V. Kleptsyn.
\newblock Infinite WARM graphs III: Strong reinforcement regime.
\newblock {\em Preprint }, 2019.


\bibitem{WARM1}
R.v.d. Hofstad, M. Holmes, A. Kuznetsov, and W. Ruszel.
\newblock Strongly reinforced P\'olya urns with graph-based competition.
\newblock {\em Ann. Appl. Probab.}, 26(4):2494--2539, 2016.

\bibitem{whisker}
M. Holmes and V. Kleptsyn.
\newblock Proof of the WARM whisker conjecture for neuronal connections.
\newblock {\em Chaos}, 27(4):043104, 2017.

\bibitem{a1}
M. Holmes and V. Kleptsyn.
\newblock Infinite WARM graphs II: Critical regime.
\newblock {\em In preparation }, 2016.


\bibitem{Lima}
	Y.~Lima. Graph-based P\'olya's urn: completion of the linear case. \emph{Stoch. Dyn.} 16(2):1660007,  2016.


\end{thebibliography}

\end{document}